\newcommand{\Qvec}{\mathbf{Q}}
\newcommand{\nvec}{\mathbf{n}}
\newcommand{\eps}{\epsilon}
\crefname{hypothesis}{Hypothesis}{Hypotheses}
\title{A Multi-Faceted Study of Nematic Order Reconstruction in Microfluidic Channels.\thanks{Submitted to the editors DATE.
\funding{AM is supported by the University of Strathclyde New Professors Fund, a Leverhulme International Academic Fellowship, an OCIAM Visiting Fellowship at the University of Oxford and a Daiwa Foundation Small Grant. JD acknowledges support from the University of Strathclyde and the DST-UKIERI. YH is supported by a Royal Society Newton International Fellowship.}}}
\author{James Dalby\thanks{Department of Mathematics, University of Strathclyde, UK
  (\email{james.dalby@strath.ac.uk}, \email{apala.majumdar@strath.ac.uk},
  \email{yucen.han@strath.ac.uk}).}
  \and Yucen Han\footnotemark[2]
  \and Apala Majumdar*\footnotemark[2]
\and Lidia Mrad\footnotemark[3]\thanks{Department of Mathematics and Statistics, Mount Holyoke College, Massachusetts, USA 
  (\email{lmrad@mtholyoke.edu}).}
}
\begin{document}
\nolinenumbers
\maketitle

\begin{abstract}
  We study order reconstruction (OR) solutions in the Beris-Edwards framework for nematodynamics, for both passive and active nematic flows in a microfluidic channel. OR solutions exhibit polydomains and domain walls, and as such, are of physical interest. We show that OR solutions exist for passive flows with constant velocity and pressure, but only for specific boundary conditions. We prove the existence of unique, symmetric and non-singular nematic profiles, for boundary conditions that do not allow for OR solutions. We compute asymptotic expansions for OR-type solutions for passive flows with non-constant velocity and pressure, and active flows, which shed light on the internal structure of domain walls. The asymptotics are complemented by numerical studies that demonstrate the universality of OR-type structures in static and dynamic scenarios. 
\end{abstract}

\begin{keywords}
  Nematodynamics, Active liquid crystals, Microfluidics 
\end{keywords}

\begin{AMS}
  34B60, 34E10, 76A15
\end{AMS}

\section{Introduction}\label{sec:intro}
Nematic liquid crystals (NLCs) are mesophases that combine fluidity with the directionality of solids \cite{deGennes}. The NLC molecules tend to align along certain locally preferred directions, leading to a degree of long-range orientational order. The orientational ordering results in direction-dependent physical properties that render them suitable for a range of industrial applications, including optical displays. When confined to thin planar cells and in the presence of fluid flow, applications of nematics are further extended, for example, to optofluidic devices and guided micro-cargo transport through microfluidic networks \cite{optofluidic, microcargo}. These hydrodynamic applications are facilitated by the coupling between the fluidity and the orientational ordering, leading to exceptional mechanical and rheological properties \cite{mondal}. 

Flow-induced deformation of nematic textures in confinement are ubiquitous, both in passive systems where the hydrodynamics are driven by external agents, as well as in active systems. Active matter systems, composed of self-driven units, also exhibit orientational ordering and collective motion, resulting in a wealth of intriguing non-equilibrium properties \cite{marchetti}. We focus on passive and active nematodynamics in microfluidic channels, with a view to model spatio-temporal pattern formation and to analyse the stability of singular lines or domain walls in such channels. 

We work with long, shallow, three-dimensional (3D) microfluidic channels of width $L$, in a reduced Beris-Edwards framework \cite{berisreference}. Our domain is effectively one-dimensional (1D), since we assume that structural details are invariant across the length and height of the channel. We work with a reduced Landau-de Gennes (LdG) $\Qvec$-tensor for the nematic ordering. This reduced $\Qvec$-tensor has two degrees of freedom 
- the planar nematic director, $\mathbf{n}$, in the two-dimensional (2D) channel cross-section, and an order parameter, $s$, related to the degree of nematic ordering. The director $\mathbf{n}$ is parameterised by an angle, $\theta$, which describes the in-plane alignment of the nematic molecules. In a fully 3D framework, the LdG $\Qvec$-tensor has five degrees of freedom and there are exact connections between the reduced LdG and the 3D LdG descriptions, as discussed in the next section. 
We consider steady unidirectional flows, which, within the Beris-Edwards framework, are captured by a system of coupled differential equations for $s$, $\theta$, and the fluid velocity $\mathbf{u}$. There are three dimensionless parameters, 
the most important of which is $L^*$, which is inversely proportional to $L^2$ and plays a key role in the stability of singular structures.

Our work is largely devoted to {\it Order Reconstruction} (OR) solutions (defined precisely in section \ref{sec:constant-pressure-flow}). OR solutions are nematic profiles with distinct director polydomains, separated by singular lines or singular surfaces, referred to as domain walls. Mathematically, the domain walls are 
simply disordered regions in the plane, and would appear as singularities in 2D optical studies but in 3D, they describe a continuous yet rapid rotation between distinct 3D NLC configurations in the two adjacent polydomains, as in the seminal paper \cite{sluckin}. OR solutions are relevant for modelling chevron or zigzag patterns observed in pressure-driven flows \cite{agha, copar}, as well as in active nematics where aligned fibers can be controlled to display a laminar flow \cite{guillamat}. 
 OR solutions have been studied in purely nematic systems, for example in \cite{lamy}, \cite{canevari} and \cite{Harris}. However, they are not limited to purely nematic systems: for instance, OR solutions exist in ferronematic systems comprising magnetic nanoparticles in NLC media \cite{dalby-farrell-majumdar-xia}. Generalized OR solutions or OR-type solutions/instabilities (defined in section \ref{sec:passive+active}) are also observed in smectics. For example, when a cell filled with a smectic-A liquid crystal is cooled to the smectic-C phase, a chevron texture is observed and has been the impetus of considerable experimental and theoretical interest \cite{RiekerChevron, MottramBiaxialChevron, PhillipsChevron}.

We thus speculate that OR solutions are a universal property of partially ordered systems, especially small systems with conflicting boundary conditions. For systems with constant velocity and constant pressure, 
we prove that OR solutions only exist for mutually orthogonal boundary conditions imposed on $\theta$. It is known that OR solutions are compatible with orthogonal boundary conditions and we prove that this is the only possibility. 
For all other choices of Dirichlet boundary conditions for $\theta$, we show that OR solutions do not exist and using geometric and comparison principles, we prove the existence of a unique, symmetric and non-singular $(s, \theta)$-profile in these cases. 
 For general flows with non-constant velocity and pressure, in section \ref{sec:passive+active}, we work with large domains ($L^*\to0$) and compute asymptotic approximations for \emph{OR-type solutions}, that exhibit a singular line or domain wall in the channel centre, for both passive and active scenarios. For OR-type solutions, the director is not constant away from the isotropic line, as in the case of OR solutions. 
 Our asymptotic methods are adapted from \cite{CM}, where the authors investigate a chevron texture characterised specifically by a $\pm \pi/4$ jump in $\theta$, using an Ericksen model for uniaxial NLCs. 
 These asymptotic methods, now placed within the Beris-Edwards framework, allow us to explicitly construct OR-type solutions with a planar jump discontinuity in $\theta$. 
 We also construct OR-type solutions for active nematodynamics, by working in the reduced Beris-Edwards framework with additional non-equilibrium active stresses \cite{active-defects}, thus illustrating the universality of OR-type solutions.

 We validate our asymptotics 
 for passive and active nematodynamics (with non-constant pressure and flow), with extensive numerical experiments, for large and small values of $L^*$. In both settings, we find OR-type solutions for all values of $L^*$, with mutually orthogonal Dirichlet conditions for $\theta$ on the channel walls. OR-type solutions are stable for large $L^*$, and unstable for small $L^*$. In fact, we observe multiple unstable OR-type solutions for small values of $L^*$. 
 Our asymptotic expansions serve as excellent initial conditions for numerically computing different branches of OR-type solutions, characterised by different jumps in $\theta$
 , and the numerics agree well with the asymptotics. We speculate that unstable OR-type solutions can potentially be stabilised by external controls and thus, play a role in switching and dynamical phenomena.

The paper is organised as follows. In section \ref{sec:theory}, we describe the Beris-Edwards model, our channel geometry and the imposed boundary conditions. In section \ref{sec:constant-pressure-flow}, we study flows with constant velocity and pressure, and identify conditions which allow and disallow OR solutions, in terms of the boundary conditions. 
In section \ref{sec:passive+active}, we compute asymptotic expansions for OR-type solutions with passive and active nematic flows for small $L^*$ or large channel widths, providing explicit limiting profiles in these cases. We then supplement our analysis with detailed numerical experiments, followed by
some brief conclusions and future perspectives in section \ref{sec:conclusions}.

\section{Theory}\label{sec:theory}
We consider NLCs sandwiched inside a three-dimensional (3D) channel, $\tilde{\Omega}=\{(x,y,z)\in\mathbb{R}^3:-D\leq x\leq D,-L\leq y\leq L,0\leq z\leq H\}$ where $L, D,$ and $H$ are the (half) width, length and full height of the channel, respectively. We assume that $D\gg L$ and $H \ll L$. We further assume planar surface anchoring conditions on the top and bottom channel surfaces at $z=0$ and $z=H$, which effectively means that the NLC molecules lie in the $xy$-plane on these surfaces, without a specified direction. Such boundary conditions are used in experiments, see for example the planar bistable nematic device in \cite{tsakonas} and the experiments on fd-viruses in \cite{lewis2014}. We impose $z$-invariant Dirichlet conditions on $y=\pm L$ and periodic conditions on $x=\pm D$, compatible with the planar conditions on $z =0, H$. Given the planar surface anchoring conditions on the top and bottom surfaces and that the well height is small, we assume that the system is invariant in the $z$-direction. Furthermore, 
since $D\gg L$, we assume that the system is invariant in the $x$-direction and this reduces our computational domain to a 1D channel, $y\in[-L,L]$.  

In the LdG framework, the $\Qvec$-tensor order parameter is a symmetric, traceless $3\times 3$ matrix, with five degrees of freedom. Given the modelling assumptions above regarding invariance in the $z$-direction, we assume that the physically relevant NLC ($\Qvec$) configurations belong to a reduced space of $\Qvec$-tensors that have a fixed eigenvector in the $z$-direction and an associated constant eigenvalue. This reduces the degrees of freedom from five to simply two degrees of freedom, as captured by the reduced LdG $\Qvec$-tensor in \eqref{eq:Q} below. Under these assumptions, the full LdG $\Qvec$-tensor 
is simply the sum of the reduced $\Qvec$-tensor and a constant $3\times 3$ matrix. See the supplementary material for an explicit example connecting the reduced and full LdG $\Qvec$-tensors. The reduced approach can be rigorously justified, in some cases, by gamma convergence methods; see Theorem 5.1 in \cite{golovaty2015} (and Theorem 2.1 in \cite{wang_canevari_majumdar_2019}) where the authors show that for planar surface anchoring conditions on $z=0, H$, and for Dirichlet conditions on the lateral surfaces, the minimizers of the LdG energy do indeed have a fixed eigenvector in the $z$-direction with constant eigenvalue, in the $H\to 0$ limit, and the reduced $\Qvec$-tensor suffices for modelling purposes. We do not give rigorous proofs in this paper, given that our work is in the spirit of formal mathematical modelling. 

There are two macroscopic variables in our reduced framework: the fluid velocity $\mathbf{u}$, and a reduced LdG $\mathbf{Q}$-tensor order parameter that measures the NLC orientational ordering in the $xy$-plane. 
More precisely, the reduced $\Qvec$-tensor is a symmetric traceless $2\times 2$ matrix i.e., $\Qvec\in S_2\coloneqq\{\Qvec\in\mathbb{M}^{2\times 2}: Q_{ij}=Q_{ji},Q_{ii}=0\}$, which can be written as:
\begin{equation}
    \label{eq:Q}
    \Qvec = s \left(\mathbf{n}\otimes \mathbf{n} - \frac{\mathbf{I}}{2}\right).
\end{equation}
Here, $s$ is a scalar order parameter, $\mathbf{n}$ is the nematic director (a unit vector describing the average direction of orientational ordering in the $xy$-plane), and $\mathbf{I}$ is the $2\times 2$ identity matrix.
Moreover, $s$ can be interpreted as a measure of the degree of order about $\mathbf{n}$, so that the nodal sets of $s$ (i.e., where $s=0$) define nematic defects in the $xy$-plane. 
As a consequence of \eqref{eq:Q}, 
the two independent components of $\Qvec$ are given by
\begin{equation}
            Q_{11} = \frac{s}{2}\cos 2 \theta,\quad Q_{12} = \frac{s}{2} \sin 2\theta,\label{eq:Q-components}
\end{equation}
when $\mathbf{n} = \left(\cos \theta, \sin \theta \right)$, and $\theta$ is the angle between $\mathbf{n}$ and the $x$-axis. Conversely, applying basic trigonometric identities, we have the following relationships,
\begin{equation}
    s=2\sqrt{Q^2_{11}+Q^2_{12}}\quad\textrm{and}\quad \theta=\frac{1}{2}\tan^{-1}\left(\frac{Q_{12}}{Q_{11}}\right). \label{eq:s-theta-relationship}
\end{equation}

We work within the Beris-Edwards framework for nematodynamics \cite{berisreference}. There are three governing equations: an incompressibility constraint for $\mathbf{u}$, an evolution
equation for $\mathbf{u}$ (essentially the Navier–Stokes equation with an additional stress due to the nematic ordering, $\sigma$), and an evolution equation for $\Qvec$ which has an additional stress induced by the fluid vorticity \cite{mondal}. These equations are given below,
\begin{eqnarray*}
    && \nabla\cdot\mathbf{u}=0,\quad\rho\frac{D\mathbf{u}}{Dt}=-\nabla p +\nabla\cdot(\mu(\nabla \mathbf{u}+(\nabla\mathbf{u})^T)+\sigma),\\
    && 
    \frac{D\Qvec}{Dt}=\mathbf{\zeta}\Qvec-\Qvec\mathbf{\zeta}+\frac{1}{\gamma}\mathbf{H}.
\end{eqnarray*}
Here $\rho$ and $\mu$ are the fluid density and viscosity respectively, $p$ is the hydrodynamic pressure, $\mathbf{\zeta}$ is the anti-symmetric part of the velocity gradient tensor and $\gamma$ is the rotational diffusion constant. The nematic stress is defined to be
\begin{equation*}
   \sigma = \Qvec\mathbf{H}-\mathbf{H}\Qvec\quad\textrm{and}\quad \mathbf{H}=\kappa\nabla^2\Qvec-A\Qvec-C|\Qvec|^2\Qvec,
\end{equation*}
where $\mathbf{H}$ is the molecular field related to the LdG free energy, $\kappa$ is the nematic elasticity constant, $A<0$ is a temperature dependent constant, $C>0$ is a material dependent constant, and $| \Qvec |=\sqrt{\textrm{Tr}(\Qvec^T\Qvec)}$, is the Frobenius norm. Finally, 
we assume that all quantities depend on $y$ alone and work with a unidirectional channel flow, so that $\mathbf{u}=(u(y),0)$. The incompressibility constraint is automatically satisfied. To render the equations nondimensional, we use the following scalings, as in \cite{mondal},
\[y = L\tilde y,\; t = \frac{\gamma L^2}{\kappa}\tilde t,\; u = \frac{\kappa}{\gamma L}\tilde u,\; Q_{11} = \sqrt{\frac{-2A}{C}}\tilde Q_{11}, \; Q_{12} = \sqrt{\frac{-2A}{C}}\tilde Q_{12}, \; p_x = \frac{\mu \kappa}{\gamma L^3}\tilde p_x,\]
and then drop the tilde for simplicity. Our rescaled domain is $\Omega=[-1,1]$ and the evolution equations become
\begin{subequations}\label{eq:Q-flow}
    \begin{align}
        & \frac{\partial Q_{11}}{\partial t}=u_y Q_{12}+ Q_{11,yy}+\frac{1}{L^*}Q_{11}(1-4(Q_{11}^2+Q_{12}^2)), \label{eq:Q-flow-a}\\
        & \frac{\partial Q_{12}}{\partial t}=-u_y Q_{11}+ Q_{12,yy}+\frac{1}{L^*}Q_{12}(1-4(Q_{11}^2+Q_{12}^2)), \label{eq:Q-flow-b}\\
        & L_1\frac{\partial u}{\partial t}=-p_x+u_{yy}+2L_2(Q_{11}Q_{12,yy}-Q_{12}Q_{11,yy})_y,\label{eq:Q-flow-c}
    \end{align}
\end{subequations}
where $L_1=\frac{\rho\kappa}{\mu\gamma}$, $L^*=\frac{-\kappa}{AL^2}$, and $L_2=\frac{-2A\gamma}{C\mu}=\frac{-2AEr^*}{CEr}$ are dimensionless parameters. Here, $Er=u_0L\mu/\kappa$ is the Ericksen number and $Er^*=u_0L\gamma /\kappa$ ($u_0$ is the characteristic length scale of the fluid velocity) is analogous to the Ericksen number in terms of the rotational diffusion constant $\gamma$, rather than viscosity $\mu$. We interpret $L^*$ as a measure of the domain size i.e. it is the square of the ratio of two length scales: the nematic correlation length, $\xi = \sqrt{-\kappa/ A}$ for $A<0$ and the domain size $L$, so that the $L^* \to 0$ limit is relevant for large channels or macroscopic domains. The parameter, $L_2$ is the product of the ratio of material and temperature-dependent constants and the ratio of rotational to momentum diffusion \cite{mondal}. In what follows, we fix $L_1=1$, and as such do not comment on its physical significance.  The static governing equations for $(s,\theta)$, can be obtained from (\ref{eq:Q-flow}) using (\ref{eq:Q-components}):
\begin{subequations}
\label{eq:flow-system}
\begin{align}
& s_{yy}=4s\theta^2_y+\frac{1}{L^*}s(s^2-1),\label{eq:s-eqtn}\\
& s\theta_{yy}=\frac{1}{2}s u_y-2s_y\theta_y,\label{eq:theta-eqtn}\\
& u_{yy}=p_x-L_2(s^2\theta_y)_{yy}\label{eq:u-eqtn}.
\end{align}
\end{subequations}
The formulation in terms of $(s, \theta)$ gives informative insight into the solution profiles 
and avoids some of the degeneracy conditions coded in the $\Qvec$-formulation.

We work with Dirichlet conditions for $(s,\theta)$ 
as given below:
\begin{subequations}
\label{eq:BCs}
\begin{align}
& s(-1)=s(1)=1,\label{eq:s-bc}\\
& \theta(-1)=-\omega\pi,\;\theta(1)=\omega\pi,\label{eq:theta-bc}
\end{align}
\end{subequations}
where
   $ \omega
    \in\left[-\frac{1}{2},\frac{1}{2}\right]$,
 is the winding number. This translates to the following boundary conditions for $\Qvec$:
\begin{equation}
    Q_{11}(\pm 1)=\frac{1}{2}\cos(2\omega\pi),\;Q_{12}(-1)=-\frac{1}{2}\sin(2\omega\pi),\;Q_{12}(1)=\frac{1}{2}\sin(2\omega\pi).\label{eq:Q-bcs}
\end{equation}
The boundary conditions in \eqref{eq:s-bc} imply that the nematic molecules are perfectly ordered on the bounding plates. We consider asymmetric Dirichlet boundary conditions in \eqref{eq:theta-bc} for the angle $\theta$. A potential issue follows from \eqref{eq:s-theta-relationship}: the range of $\theta$ is $(-\frac{\pi}{4},\frac{\pi}{4})$, but our boundary conditions extend to $\pm\frac{\pi}{2}$. However, we circumvent this issue by using the function atan2$(y,x)\in(-\pi,\pi]$, which returns the angle between the line connecting the point $(x,y)$ to the origin and the positive $x$ axis.
\begin{figure}[ht]
    \centering
    \begin{minipage}{1.0\textwidth}
        \centering
        \includegraphics[width=0.8\textwidth]{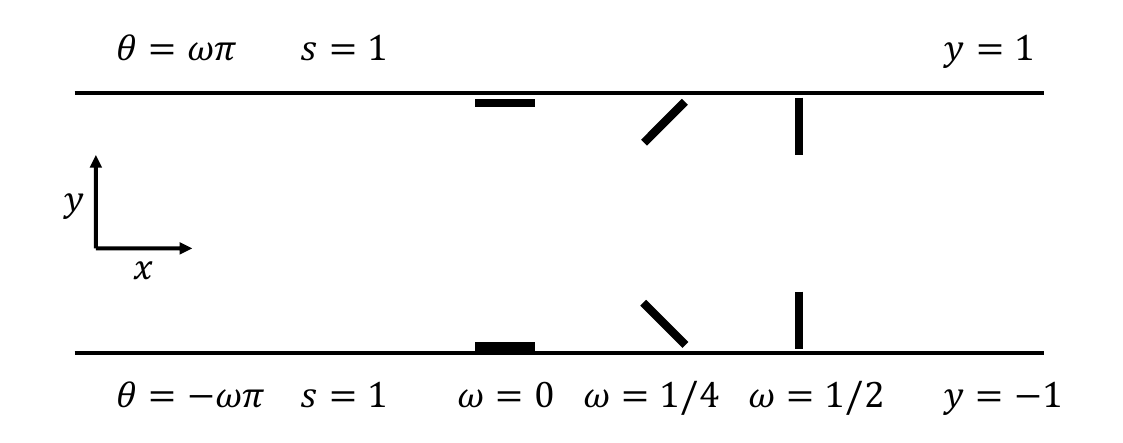}\\
    \end{minipage}
\caption{Boundary conditions for $s$ and $\theta$, and some example boundary conditions on the director.}
    \label{fig:BCs-s-theta}
\end{figure}
\noindent
For the flow field, we consider the typical no-slip boundary conditions, namely 
\begin{align}
    u(-1)=u(1)=0,\label{eq:poiseuille}
\end{align}
and assume that the pressure $p$ is uniform in the $y$-direction, depending on $x$ only.


\section{Passive flows with constant velocity and pressure}\label{sec:constant-pressure-flow}

In this section, we study nematic flows with constant velocity and pressure without additional activity. This framework, though somewhat artificial, allows for OR solutions, although OR-type solutions exist in more generic situations with non-constant flows. We work with both the $\Qvec$- and $(s,\theta)$-frameworks in this section.

In 
our one-dimensional framework, OR solutions correspond to a partition of the domain $\Omega=[-1,1]$ into sub-domains, $\Omega = \sum_{j=1}^{n}\Omega_j$, where each $\Omega_j$ is a \emph{polydomain}. These polydomains have
constant $\theta$ (recall that $\theta$ is the orientation of the planar director, $\mathbf{n}$), separated by domain walls (with $s=0$) to account for planar jumps in $\theta$ across polydomain boundaries. 
OR-type solutions are simply interpreted as solutions of (\ref{eq:Q-flow}) that have a non-empty nodal set for $s$ or exhibit domain walls, without the constraint of constant $\theta$ in each polydomain. 
In the reduced $\Qvec$-framework, OR solutions have distinct but less obvious signatures and the domain walls correspond to the nodal set of the reduced $\Qvec$-tensor. 
In a 3D LdG $\Qvec$-description, the corresponding nematic director rapidly rotates between two distinct director profiles across the domain wall, and the rotation is mediated by maximal biaxiality; see supplementary material.
We show, below, that OR-solutions are only compatible with specific boundary conditions in the $\Qvec$-framework.

In the $(s, \theta)$-framework, OR solutions are characterised by sub-intervals with constant $\theta$. 
From \eqref{eq:theta-eqtn}, constant $\theta$  implies constant fluid velocity $u$ and from \eqref{eq:u-eqtn}, constant pressure, $p$. Therefore, we assume constant velocity and pressure to start with. In what follows, $\prime$ denotes differentiation with respect to $y$. 

In this scenario the static version of (\ref{eq:Q-flow-a})-(\ref{eq:Q-flow-b}) is 
\begin{subequations}\label{eq:Q-EL-eqnts}
\begin{align}
    & Q^{\prime\prime}_{11}=\frac{1}{L^*} Q_{11}(4(Q^2_{11}+Q^2_{12})-1), \label{eq:Q11eqtn}\\
    & Q^{\prime\prime}_{12}=\frac{1}{L^*} Q_{12}(4(Q^2_{11}+Q^2_{12})-1).\label{eq:Q12eqtn}
\end{align}
\end{subequations}

\noindent
From these equations it follows that \eqref{eq:Q-flow-c} is satisfied. The equations (\ref{eq:Q11eqtn})-(\ref{eq:Q12eqtn}) are the Euler-Lagrange equations associated with the energy
\begin{align}
    F_{LG}[Q_{11}, Q_{12}]=
    \int_{\Omega}\left(\left(Q_{11}^\prime\right)^2+\left(Q_{12}^\prime\right)^2\right)+\frac{1}{L^*}(Q^2_{11}+Q^2_{12})(2(Q^2_{11}+Q^2_{12})-1 )~\mathrm{d}y. \label{eq:q-energy}
\end{align}
The admissible $\Qvec$-tensors belong to the Sobolev space, $W^{1,2}\left([-1,1]; S_2 \right)$, where $S_2$ is the space of symmetric and traceless $2\times 2$ matrices, subject to appropriately defined boundary conditions (see (\ref{eq:Q-bcs})). The stable and physically observable configurations correspond to local or global minimizers of (\ref{eq:q-energy}), in the prescribed admissible space.

In the static case, with constant $u$ and $p$, the corresponding equations for $(s, \theta)$ can be deduced from \eqref{eq:s-eqtn}, \eqref{eq:theta-eqtn} :
\begin{subequations}
\label{eq:EL-eqtns}
\begin{align}
& s^{\prime\prime}=4s(\theta^\prime)^2+\frac{1}{L^*}s(s^2-1),\label{eq:s-no-flow}\\
& 
\label{eq:theta-no-flow}
\left(s^2\theta^\prime\right)^\prime=0,\implies s^2\theta^\prime=B,
\end{align}
\end{subequations}
whilst \eqref{eq:u-eqtn} is automatically satisfied. In the above, $B$ is a fixed constant of integration; in fact
\begin{equation}
    B=\theta^\prime(-1)=\theta^\prime(1).\label{eq:B}
\end{equation}

When $\omega\geq0$ and recalling the boundary conditions for $\theta$, there exists a point $y_0$ such that $\theta^\prime(y_0)\geq0$, hence $B\geq0$, and $\theta^\prime\geq 0$ for all $y\in[-1,1]$.
Thus, we have
\begin{equation}\label{eq:theta-bounds}
    -\omega\pi\leq\theta\leq\omega\pi,\;\forall y\in[-1,1]\textrm{ and }\forall\omega\in\left[0,\frac{1}{2}\right].
\end{equation}
Similar comments apply when $\omega\leq 0$, for which $B\leq0$, and $\theta^\prime\leq0$ for all $y\in[-1,1]$. If $B=0$, we either have $s=0$ or $\theta$=constant almost everywhere, compatible with the definition of an OR solution (unless $\omega=0$, and $(s,\theta)=(1,0)$, which is not an OR solution). Conversely, an OR solution, by definition, has $B=0$ since polydomain structures correspond to piecewise constant $\theta$-profiles. In other words, if $\omega\neq0$, OR solutions exist if and only if $B=0$. If $B\neq 0$, then OR solutions are necessarily disallowed because a non-zero value of $B$ implies that $s\neq 0$ on $\Omega$. The following results show that the choice of $B$ is in turn dictated by $\omega$, or the Dirichlet boundary conditions, and this sheds beautiful insight into how the boundary datum manifests in the multiplicity and regularity of solutions. 
In what follows, we let $\epsilon:=\frac{1}{L^*}$, so that $\epsilon \propto L^2$ where $L$ is the physical channel width.

Note that \eqref{eq:s-no-flow} and \eqref{eq:theta-no-flow} are the Euler-Lagrange equations of the following energy, 
\begin{align}
    F_{LG}[s,\theta]=\int_{\Omega}\left(\frac{( s^\prime)^2}{4}+s^2(\theta^\prime)^2\right)+\frac{\eps s^2}{4}\left(\frac{s^2}{2}-1\right)~\mathrm{d}y, \label{eq:s-energy}
\end{align}
but we only consider $(s, \theta) \in W^{1,2}\left(\Omega; \mathbb{R} \right)$ and focus on smooth, classical solutions of (\ref{eq:s-no-flow}) and (\ref{eq:theta-no-flow}), subject to the boundary conditions in (\ref{eq:s-bc})-(\ref{eq:theta-bc}), and not OR solutions. 
We first prove that  OR solutions only exist for the special values, $\omega = \pm \frac{1}{4}$, 
in the $\Qvec$-framework. 
If $\omega=\pm \frac{1}{4}$, then $B$ can be either zero or non-zero for different solution branches, especially for small values of $\epsilon$ that admit multiple solution branches. Once the correspondence between $\omega$, $B$ and OR solutions is established in the $\Qvec$-framework, we proceed to prove several qualitative properties of the corresponding $(s, \theta)$-profiles which are of independent interest, followed by asymptotics and numerical experiments (also see supplementary material). 

\begin{theorem}\label{thm:uniqueness+OR}
For all $\eps\geq0$, there exists a minimizer of the energy \eqref{eq:q-energy}, in the admissible space
\begin{multline}
\mathcal{A} = \left\{ \Qvec \in W^{1,2}\left([-1, 1]; S_2 \right); Q_{11}(\pm1)  = \frac{\cos(2\omega \pi)}{2},\right.\\
 \left.Q_{12}(-1)=-\frac{\sin 2\omega \pi}{2}, Q_{12}(1) = \frac{\sin 2\omega \pi}{2} \right\}.\label{eq:admissible_space}
\end{multline}
Moreover, the system (\ref{eq:Q-EL-eqnts}) admits an analytic solution for all $\epsilon \geq 0$, in $\mathcal{A}$. OR solutions only exist for $\omega=\pm\frac{1}{4}$ in (\ref{eq:Q-bcs}). 
\end{theorem}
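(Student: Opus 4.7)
The plan is to address the three claims of the theorem in turn: existence of a minimiser, analytic regularity of solutions, and the classification of winding numbers $\omega$ admitting OR solutions.

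For existence, I would apply the direct method of the calculus of variations on $\mathcal{A}$. The set $\mathcal{A}$ is non-empty (a smooth interpolation of the Dirichlet data lies in it), and completing the square shows $(Q_{11}^2 + Q_{12}^2)(2(Q_{11}^2 + Q_{12}^2) - 1) \geq -1/8$, so $F_{LG}$ is bounded below and any minimising sequence is uniformly bounded in $W^{1,2}$. Extracting a weakly convergent subsequence and using the compact embedding $W^{1,2}([-1,1]) \hookrightarrow C^0([-1,1])$, the limit lies in $\mathcal{A}$ by continuity of traces, and weak lower semicontinuity of the Dirichlet term together with continuity of the bulk term under uniform convergence identify it as a minimiser. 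Analytic regularity is then a standard bootstrap on the Euler-Lagrange system \eqref{eq:Q-EL-eqnts}: since $\Qvec \in W^{1,2} \subset C^0$, the polynomial right-hand side is $C^0$, hence $\Qvec \in C^2$, and iteration yields $\Qvec \in C^\infty$; real-analyticity then follows from the classical theorem for ODE systems with analytic right-hand sides.

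The heart of the theorem is the characterisation of OR solutions. Suppose $\Qvec$ is an OR solution and let $I \subset [-1,1]$ be the leftmost polydomain, on which $\theta \equiv -\omega\pi$. By \eqref{eq:Q-components}, on $I$ we have $(Q_{11}, Q_{12}) = (s/2)(\cos(2\omega\pi), -\sin(2\omega\pi))$, so the function
\begin{equation*}
f(y) := \sin(2\omega\pi)\, Q_{11}(y) + \cos(2\omega\pi)\, Q_{12}(y)
\end{equation*}
vanishes identically on $I$. Because $f$ is a linear combination of real-analytic functions (by the regularity step), the identity theorem forces $f \equiv 0$ throughout $[-1,1]$. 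Evaluating at $y = 1$ with the boundary data \eqref{eq:Q-bcs} yields $f(1) = \tfrac{1}{2}\sin(4\omega\pi)$, so $\sin(4\omega\pi) = 0$ and $\omega \in \{0, \pm 1/4, \pm 1/2\}$. The cases $\omega = 0$ and $\omega = \pm 1/2$ are excluded because in both, any $\theta$-jump consistent with the boundary data is a multiple of $\pi$, leaving $\Qvec$ invariant, so no genuine polydomain decomposition of $\Qvec$ occurs. For $\omega = \pm 1/4$, an OR solution is exhibited by the ansatz $Q_{11} \equiv 0$, which reduces \eqref{eq:Q-EL-eqnts} to a scalar Allen-Cahn-type equation for $Q_{12}$ with antisymmetric Dirichlet data $\pm 1/2$, having an antisymmetric analytic solution with a single nodal point at $y = 0$.

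The main obstacle is identifying the correct analytic combination of $Q_{11}$ and $Q_{12}$ that must vanish on a polydomain; once this is in hand, the analyticity established in the regularity step promotes a local algebraic constraint into the global identity $\sin(4\omega\pi) = 0$, and the classification falls out. The rest is routine, since the one-dimensional setting and polynomial nonlinearity make both the direct method and the bootstrap regularity straightforward.
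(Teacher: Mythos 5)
Your existence and regularity steps are fine and match the paper's (which simply invokes the direct method and elliptic regularity). Your derivation of the necessary condition $\sin(4\omega\pi)=0$ is a genuinely different and rather clean route: the paper instead observes that $Q_{12}'Q_{11}-Q_{11}'Q_{12}$ is conserved and equals $B/2$, sets $B=0$ for OR solutions, and then analyses the resulting constraint $Q_{12}'Q_{11}=Q_{11}'Q_{12}$ interval by interval, using analyticity to propagate $|Q_{11}|=c|Q_{12}|$ to the whole domain and evaluating at $y=\pm1$. Your argument --- that $f=\sin(2\omega\pi)Q_{11}+\cos(2\omega\pi)Q_{12}$ vanishes on the leftmost polydomain and hence everywhere by the identity theorem --- reaches the same restriction $\omega\in\{0,\pm\tfrac14,\pm\tfrac12\}$ more directly, and your exhibition of the OR solution for $\omega=\pm\tfrac14$ via the ansatz $Q_{11}\equiv0$ is exactly the paper's.

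The gap is in the exclusion of $\omega=0$ and $\omega=\pm\tfrac12$. Your stated reason --- that any $\theta$-jump consistent with the boundary data is a multiple of $\pi$ and hence leaves $\Qvec$ invariant --- is not correct and does not rule out the relevant configurations. For these values of $\omega$ your own identity gives $Q_{12}\equiv0$, and on a polydomain with $s\neq0$ this forces $\sin 2\theta=0$, i.e.\ $\theta\in\{0,\tfrac\pi2\}\pmod\pi$; adjacent polydomains may therefore a priori differ by $\pi/2$ (a genuine rotation of the director, with $Q_{11}$ changing sign across the wall), for instance a three-domain profile $\theta=0,\tfrac\pi2,0$ compatible with $\theta(\pm1)=0$. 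To exclude this one must show that the reduced scalar problem $Q_{11}''=\epsilon Q_{11}(4Q_{11}^2-1)$ with the symmetric data $Q_{11}(\pm1)=\pm\tfrac12$ (sign according to $\omega$) admits only the constant solution and in particular has no interior zero of $Q_{11}$, so that $s$ never vanishes and no domain wall can form. The paper does precisely this by adapting Lemma~5.4 of \cite{lamy}: the first integral forces either $Q_{11}'(\pm1)=0$ (whence Cauchy uniqueness gives the constant) or a monotone $Q_{11}$, which is incompatible with equal boundary values. Without this ODE step your classification is incomplete.
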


\begin{proof}
The existence of an energy minimizer for \eqref{eq:q-energy} in $\mathcal{A}$, is immediate from the direct methods in the calculus of variations, for all $\epsilon$ and $\omega$, and the minimizer is a classical solution of the associated Euler-Lagrange equations (\ref{eq:Q-EL-eqnts}), for all $\epsilon$ and $\omega$. In fact, using standard arguments in elliptic regularity, one can show that all solutions of the system \eqref{eq:Q-EL-eqnts} are analytic \cite{brezis_bethuel_helein1993}. 

The key observation is
\begin{equation*}
    \left(Q_{12}^\prime Q_{11}-Q_{11}^\prime Q_{12}\right)^\prime=Q_{12}^{\prime\prime}Q_{11}+Q_{12}^\prime Q_{11}^\prime -Q_{12}^\prime Q_{11}^\prime-Q_{12}Q_{11}^{\prime\prime}=0,
\end{equation*}
and hence, $Q_{12}^\prime Q_{11}-Q_{11}^\prime Q_{12}$ is a constant. In fact, using \eqref{eq:s-theta-relationship}, we see that
\begin{equation*}
    (s^2\theta^\prime)^\prime=2(Q_{12}^{\prime\prime} Q_{11}-Q_{11}^{\prime\prime} Q_{12})=0 \implies 
    s^2\theta^\prime=2(Q_{12}^\prime Q_{11}-Q_{11}^\prime Q_{12})=B,\label{eq:s-theta-Q-B}
\end{equation*}
where $B$ is as in \eqref{eq:theta-eqtn}. 
Now let $B=0$ (so that OR solutions are possible), then
\begin{equation}
    Q_{12}^\prime Q_{11}=Q_{11}^\prime Q_{12}\textrm{ for all $y\in[-1,1]$}.\label{eq:Q-constraint}
\end{equation}
There are two obvious solutions of (\ref{eq:Q-constraint}) i.e. $Q_{11} \equiv 0$ (i.e., $\omega=\pm\frac{1}{4}$), or $Q_{12}\equiv 0$ (i.e., $\omega=0,\pm\frac{1}{2}$), everywhere on $\Omega$. For the case $Q_{12}\equiv0$ and $\omega=\pm\frac{1}{2}$, the Euler-Lagrange equations for $\Qvec$ reduce to
\begin{equation}
    \begin{cases}
    &Q_{11}^{\prime\prime}=\epsilon Q_{11}(4Q_{11}^2-1),\\
    &Q_{11}(-1)= -\frac{1}{2},\;Q_{11}(1)= -\frac{1}{2}.
    \end{cases}\label{eq:Q12-OR-equation}
\end{equation}
This is essentially the ODE considered in equation (20) of \cite{lamy}. Applying the arguments in Lemma 5.4 of \cite{lamy}, the solution $Q_{11}$ of \eqref{eq:Q12-OR-equation} must satisfy $Q^\prime_{11}(-1)=0$, or $Q^\prime_{11}$ is always positive. However, the latter is not possible since we have symmetric boundary conditions. Hence, when $\omega=\pm\frac{1}{2}$, the unique solution to \eqref{eq:Q12-OR-equation} is the constant solution $(Q_{11},Q_{12})=(-\frac{1}{2}$,0). 
This corresponds to $s=1$ everywhere in $\Omega$, which is not an OR solution. 
The same arguments apply to the case $Q_{12} \equiv 0$ and $\omega=0$. In this case the boundary conditions are $Q_{11}(\pm1)=\frac{1}{2}$, and the corresponding $(s, \theta)$ solution is simply, $(s,\theta)=(1,0)$, which is again not an OR solution.

When $Q_{11}\equiv0$ ($\omega=\pm\frac{1}{4}$), the $\Qvec$ system becomes 
\begin{equation}
    \begin{cases}
    &Q_{12}^{\prime\prime}=\epsilon Q_{12}(4Q_{12}^2-1),\\
    &Q_{12}(-1)=-\frac{1}{2},\;Q_{12}(1)=\frac{1}{2}.
    \end{cases}\label{eq:Q11-OR-equation}
\end{equation}
Applying the arguments in Lemma 5.4 of \cite{lamy}, we see \eqref{eq:Q11-OR-equation} has a unique solution which is odd and increasing, with a single zero at $y=0$ - the centre of the channel. This is an OR solution, since $Q_{11} = 0$ implies that $\theta$ is constant on either side of $y=0$.

It remains to show that there are no solutions $(Q_{11},Q_{12})$ of \eqref{eq:Q-EL-eqnts}, which satisfy \eqref{eq:Q-constraint}, other than the possibilities considered above.  
To this end, we assume that we have non-trivial solutions, $Q_{11}$ and $Q_{12}$ such that \eqref{eq:Q-constraint} holds. We recall that all solution pairs, $(Q_{11}, Q_{12})$ of (\ref{eq:Q-EL-eqnts}) are analytic and hence, can only have zeroes at isolated interior points of $\Omega = [-1,1]$. This means that there exists a finite number of intervals $(-1,y_1), \ldots,(y_n,1)$, such that $Q_{11}\neq0$ and $Q_{12}\neq0$ in the interior of these intervals, whilst either $Q_{11}(y_i)$, $Q_{12}(y_i)$, or both, equal zero at each intervals end-points. 
We then have that
\begin{equation*}
    \frac{Q_{12}^\prime}{Q_{12}}=\frac{Q_{11}^\prime}{Q_{11}}\implies |Q_{11}|=c_i|Q_{12}|\textrm{ for }
    y\in(y_{i-1},y_i)
\end{equation*}
for constants $c_i>0$ and $i=1,\ldots, n$. Therefore, there exists an interval, $(y_{i-1}, y_i)$, for which $Q_{11}$ and $Q_{12}$ have the same, or opposite signs. Assume without loss of generality (W.L.O.G.) $Q_{11}$ and $Q_{12}$ have the same sign, 
then the analytic function
\[
f(y) := Q_{11}(y) - c_i Q_{12}(y) = 0, \textrm{ for } y\in (y_{i-1},y_i).
\]
Therefore, $f(y) = 0$ for all $y\in [-1, 1]$. 
Evaluating at $y=\pm 1$, we have
\begin{equation*}
    \cos(2\omega\pi)=-\sin(2\omega\pi)c_i \textrm{  and }\cos(2\omega\pi)=\sin(2\omega\pi)c_i,
\end{equation*}
and this is only possible if $\cos(2\omega\pi)=0$ and $\sin(2\omega\pi)c_i=0$, which implies $\omega=\pm\frac{1}{4}$ and $c_i=0$. 
Hence, there are only three possibilities for $\omega = 0, \pm\frac{1}{4}, \pm \frac{1}{2}$ that are consistent with \eqref{eq:Q-constraint}, of which OR solutions are only compatible with $\omega = \pm \frac{1}{4}$. 
\end{proof}

In what follows, we consider the solution profiles, $(s,\theta)$ of \eqref{eq:s-no-flow} and \eqref{eq:theta-no-flow}, from which we can construct a solution of the system \eqref{eq:Q-EL-eqnts}, using the definitions \eqref{eq:Q-components}. 
 The first proposition below is adapted from \cite{majumdar-2010-article}, although some additional work is needed to deal with the positivity of $s$; see the supplementary material.

\begin{theorem}\label{thm:maximum_principle}
(Maximum Principle) Let $s$ and $\theta$ be  
solutions of \eqref{eq:s-no-flow} and \eqref{eq:theta-no-flow}, where $s$ is at least $C^2$ and $\theta$ is at least $C^1$, then
\begin{equation}
     0< s\leq 1\quad\forall y\in[-1,1]. 
\end{equation}
\end{theorem}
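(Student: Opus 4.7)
The strategy I would use is to treat the two bounds separately: the upper bound $s \leq 1$ by a classical weak maximum principle, and the strict positivity $s > 0$ by a first-integral argument that uses the conservation law \eqref{eq:theta-no-flow} in an essential way. Throughout, I would rely on the fact that $s = 2\sqrt{Q_{11}^2 + Q_{12}^2} \geq 0$, inherited from \eqref{eq:s-theta-relationship}.

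For the upper bound, I would argue by contradiction: suppose $s$ attains a maximum $M > 1$ at some $y_0 \in [-1,1]$. Because $s(\pm 1) = 1$, the maximum must lie in the open interval, so $s'(y_0) = 0$ and $s''(y_0) \leq 0$. Evaluating the right-hand side of \eqref{eq:s-no-flow} at $y_0$, the term $4 s (\theta')^2$ is non-negative and the term $\frac{1}{L^*} s(s^2 - 1)$ is strictly positive since $s(y_0) > 1$. Together these force $s''(y_0) > 0$, a contradiction.

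For the lower bound, I would again assume for contradiction that $s(y_0) = 0$ at some interior $y_0$; since $s \in C^1$ and $s \geq 0$, $y_0$ is a local minimum and hence $s'(y_0) = 0$. The plan is to multiply \eqref{eq:s-no-flow} by $s'$, substitute $\theta' = B/s^2$ using \eqref{eq:theta-no-flow}, integrate, and then clear the singularity by multiplying through by $s^2$, producing an identity of the shape
\begin{equation*}
s^2 (s')^2 + 4 B^2 = \frac{1}{L^*}\Bigl(\frac{s^6}{2} - s^4\Bigr) + 2 E s^2,
\end{equation*}
valid on any subinterval where $s > 0$. Passing to the limit $y \to y_0$, where both $s$ and $s'$ tend to $0$, forces $B = 0$, so the case $B \neq 0$ contradicts itself immediately.

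In the remaining case $B = 0$, the conservation law gives $\theta$ constant on each connected component of $\{s > 0\}$, so on the component containing $y = -1$ (where $s = 1$), $s$ satisfies the autonomous ODE $s'' = \frac{1}{L^*} s(s^2 - 1)$. A standard energy integral, combined with $s(y_0) = s'(y_0) = 0$ at the first zero, would give $(s')^2 = \frac{s^2(s^2 - 2)}{2 L^*}$, which is strictly negative for $0 < s \leq 1$ by the upper bound already proved; hence $s \equiv 0$ on this component, contradicting $s(-1) = 1$. The main obstacle, and the reason the proof requires more than the ordinary maximum principle, is precisely this $B = 0$ case: OR solutions satisfy $B = 0$ and do have isolated zeros of $s$, but they fail to be $C^1$ there (the profile behaves like $|y - y_0|$), so the $C^1$ hypothesis in the theorem is exactly what is needed to exclude OR solutions and make the first-integral argument consistent.
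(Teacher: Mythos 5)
Your proposal is correct and follows essentially the same route as the paper's proof: the upper bound by evaluating \eqref{eq:s-no-flow} at an interior maximum with $s>1$, and positivity by showing that a zero of $s$ forces $B=0$ and then exploiting the first integral of the reduced equation $s''=\epsilon s(s^2-1)$ together with $s(\pm 1)=1$. The only cosmetic difference is in how the final contradiction is packaged — you observe that the energy constant must vanish at the zero, making $(s')^2<0$, whereas the paper pins down the constant of integration as $\epsilon/2$ and invokes Picard--Lindel\"of to conclude $s\equiv 1$ — and your closing remark about the $C^1$ hypothesis being precisely what excludes OR profiles matches the paper's intent.
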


For the next batch of results, we omit the case $B=0$ and focus on the $(s, \theta)$-profiles of non OR-solutions, which are necessarily smooth.
We exploit this fact to prove that there exists a unique solution pair, $(s, \theta)$ of (\ref{eq:EL-eqtns}), 
such that $s$ has a symmetric even profile about $y=0$, for every $B\neq0$.

\begin{theorem} \label{thm:symmetry}

Any non-constant and non-OR solution, $s$, of the Euler-Lagrange equations \eqref{eq:EL-eqtns}, has a single critical point which is necessarily a non-trivial global minimum at some $y^*\in (-1, 1)$. 
\end{theorem}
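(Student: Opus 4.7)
The plan is to reduce \eqref{eq:EL-eqtns} to an autonomous scalar ODE for $s$ and then exploit the resulting uniqueness of initial-value problems. Since the solution is non-OR, the constant $B$ in \eqref{eq:theta-no-flow} is nonzero, so eliminating $\theta$ via $\theta'=B/s^2$ in \eqref{eq:s-no-flow} gives
\[
s'' = F(s), \qquad F(s) := \frac{4B^2}{s^3} + \epsilon\, s(s^2-1).
\]
Because $F$ depends only on $s$, a $C^2$ solution of this equation is uniquely determined by its initial data $(s(y_0), s'(y_0))$ at any point $y_0$.

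I would first strengthen Theorem \ref{thm:maximum_principle} to a strict bound $s<1$ on $(-1,1)$ for non-constant $s$: if $s(y_0)=1$ at an interior point, it would be a local maximum with $s'(y_0)=0$, yet $s''(y_0)=F(1)=4B^2>0$ contradicts that maximum. It follows that $s$ attains its global minimum at some $y^*\in(-1,1)$ with $s(y^*)<1$, providing the non-trivial minimum claimed in the statement.

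For uniqueness of critical points, I would use a reflection argument driven by autonomy. At any interior critical point $y_*$, the reflected function $\tilde s(y):=s(2y_*-y)$ satisfies the same autonomous ODE with the same initial data $(s(y_*),0)$ at $y_*$, so ODE uniqueness gives $s=\tilde s$ on the largest subinterval of $[-1,1]$ symmetric about $y_*$, i.e., $s$ is even about $y_*$. Pairing endpoints through this symmetry yields $s(2y_*-1)=s(1)=1$ when $y_*\geq 0$ and $s(2y_*+1)=s(-1)=1$ when $y_*\leq 0$. Since the strengthened maximum principle rules out $s=1$ on the open interval, the point $2y_*\pm 1$ must coincide with a boundary point, which forces $y_*=0$. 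Thus every interior critical point equals $0$, so the global minimum $y^*$ is the unique critical point.

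The hardest part is the symmetry step: $[-1,1]$ is not symmetric about an arbitrary $y_*$, so $\tilde s$ is a priori only defined on the translated interval $[2y_*-1,2y_*+1]$, and the boundary-pairing identity is available only on the side where this translated interval meets $[-1,1]$ nontrivially. One must therefore split into cases according to the sign of $y_*$ to match the correct boundary condition (and verify that $2y_*\pm 1$ genuinely lies in $[-1,1]$). Once this technicality is handled, uniqueness of the critical point follows by combining the strict maximum principle with the reflection symmetry inherited from autonomy of the ODE, and the theorem is complete.
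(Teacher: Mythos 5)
Your proposal is correct, and it takes a genuinely different route from the paper. The paper works with the first integral of $s''=4B^2/s^3+\epsilon(s^3-s)$, writes $s'=\pm\sqrt{-4B^2s^{-2}+\epsilon(s^4/2-s^2)+A}$, and reduces the critical values of $s$ to roots of the sextic $f(s)=s^6-2s^4+\frac{2A}{\epsilon}s^2-\frac{8B^2}{\epsilon}$ in $(0,1]$; a case analysis on the shape of $f$ (governed by the sign of $A-\tfrac{2}{3}\epsilon$), together with an exclusion of the repeated-root case via Cauchy uniqueness, shows there is a unique admissible critical value, and hence a unique critical point. You instead exploit the autonomy of the same scalar ODE: the strict bound $s<1$ in the interior (obtained from $s''=F(1)=4B^2>0$ at any interior touching point, a nice sharpening of Theorem \ref{thm:maximum_principle} that the paper does not state) plus the reflection $\tilde s(y)=s(2y_*-y)$ and IVP uniqueness force every interior critical point to sit at $y=0$. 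Your handling of the asymmetric overlap $[\max(-1,2y_*-1),\min(1,2y_*+1)]$ is the right technical care, and uniqueness of the Cauchy problem is legitimate here because $0<s\leq 1$ keeps the trajectory where $F$ is smooth. What each approach buys: yours is shorter, avoids the polynomial case analysis entirely, and delivers for free both the location $y^*=0$ and the evenness of $s$, which the paper only obtains later in Theorem \ref{thm:symmetryagain} after the separate uniqueness argument of Theorem \ref{thm:uniqueness}; the paper's phase-plane analysis, on the other hand, yields extra quantitative information (e.g.\ the lower bound $s_{\min}>s_+$ in its Case 3) that is reused implicitly in its subsequent comparison arguments. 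One small point worth making explicit in a final write-up: $s'(\pm1)\neq 0$ (since $s'(\pm1)=0$ with $s(\pm1)=1$ and $s''(\pm1)=4B^2>0$ would push $s$ above $1$), so no critical points hide at the boundary and "single critical point'' is unambiguous.
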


\begin{proof}
For clarity, we denote a specific solution of \eqref{eq:s-no-flow} and \eqref{eq:theta-no-flow}, by $(s_{sol},\theta_{sol})$ in this proof.
Recall that for non-OR solutions, we necessarily have $B=\theta^\prime(\pm1)\neq0$ and $s\neq0$ anywhere. 
Using the definition of $B$ in (\ref{eq:EL-eqtns}), we have
\begin{equation}
s^{\prime\prime}=\frac{4B^2}{s^3}+\epsilon(s^3-s). \label{eq:An-1}
\end{equation}
The right hand side of \eqref{eq:An-1} is well-defined and continuous for $s\in(0,1]$, and as such, a solution, $s_{sol}$, will be $C^2$. In fact, the right hand side of \eqref{eq:An-1} is smooth, hence any solution, $s_{sol}$, will be smooth.
The boundary conditions, $s\left(\pm 1 \right) = 1$, imply that a non-trivial solution has $s_{sol}^\prime(y^*)=0$ for some $y^*\in[-1,1]$, where $s^\prime$ is defined as, 
\begin{equation}
    s^\prime=\pm\sqrt{\left(-4B^2s^{-2}+\epsilon\left(\frac{s^4}{2}-s^2\right)+J\right)}.\label{eq:s-prime}
\end{equation}
Here, $A$ is a constant of integration and  $J=4B^2+\frac{\eps}{2}+s^\prime(\pm1)^2$, hence, we must have
\begin{equation}
   J\geq 4B^2+\frac{\epsilon}{2}. \label{eq:A-constraint-1}
\end{equation}

Since $s^\prime$ is defined in terms of $s$ and not $y$, solutions of $s^\prime=0$ give us the extrema of a solution $s_{sol}$ (i.e., maxima or minima), rather than the location of the critical points on the $y$-axis.  The condition $s^\prime=0$ is equivalent to 
\begin{equation}
    J=4B^2 s^{-2}-\epsilon\left(\frac{s^4}{2}-s^2\right). \label{eq:2}
\end{equation}
Clearly if $\epsilon=0$, we can only have one extremum, namely $s=\sqrt{\frac{4B^2}{J}}$, which in view of the boundary conditions and maximum principle, must be a minimum. For $\epsilon>0$, solving \eqref{eq:2} is equivalent to computing the roots of $f(s)=0$ where
\begin{equation}
    f(s):=s^6-2s^4+\frac{2J}{\epsilon}s^2-\frac{8B^2}{\epsilon} \label{eq:f}.
\end{equation}

Firstly, note that $f$ has a root for $s\in(0,1]$, since $f(0)=\frac{-8B^2}{\epsilon}<0$ and $f(1)=-1+\frac{2J}{\epsilon}-\frac{8B^2}{\epsilon}\geq 0$, by \eqref{eq:A-constraint-1}.
Differentiating \eqref{eq:f}, we obtain
\begin{equation*}
    \frac{df}{ds}(s)=6s^5-8s^3+\frac{4J}{\epsilon}s \label{f prime},
\end{equation*}
and the critical points of $f$ are given by
\begin{equation}
    s=0,\;s_\pm=\sqrt{\frac{8\pm\sqrt{64-\frac{96J}{\epsilon}}}{12}},\label{eq:critical-points}
\end{equation}
provided that $A\leq\frac{2}{3}\epsilon$. There are now three cases to consider.

Case 1: If $J>\frac{2}{3}\epsilon$, $f(s)$ has one critical point at $s=0$, which is a negative global minimum. Hence, $f$ has one root in the range, $s\in(0,1]$. 

Case 2: Let $J=\frac{2}{3}\epsilon$, so that the two critical points $s_\pm$ coincide. The point $s=0$ is still a minimum of $f(s)$ and the coefficient of $s^6$ is positive (so $f\to\infty$ as $s\to\infty$), so we deduce that $s_\pm$ is a stationary point of inflection (this can be checked via direct computation).
So again, $f$ has one root for $s\in(0,1]$. 

Case 3: Finally, let $J<\frac{2}{3}\epsilon$, so that $s_\pm$ are distinct critical points of $f$.  
The point, $s=0$, is still a minimum of $f(s)$ and the coefficient of $s^6$ is positive, so that there are two possibilities: (a) $s_{\pm}$ are distinct saddle points, and since $f$ is increasing for $s>0$, we see $f$ has a single root for $s\in (0,1]$, or (b) $s_-$ is a local maximum and $s_+$ is a local minimum of $f(s)$. In the latter case, $s=0$ is still a global minimum for $f(s)$, because $f(s_+)>f(0)$. 
Using this information, we can produce a sketch of $f(s)$ (shown in Figure \ref{fig:case-3}), and there are 5 cases to consider for the number of roots of $f$. 

In cases (i) and (v) of Figure \ref{fig:case-3}, $f$ has only one root for $s\in (0, 1]$. Next, in order for the derivative $s_{sol}^\prime$ to be real, the term under the square root in \eqref{eq:s-prime}, has to be non-negative.  This requires that $f(s)\geq 0$ for all $s\in[c,1]$, for some $c>0$. Applying this argument to cases (ii) and (iii) in Figure \ref{fig:case-3} by omitting regions with $f(s)<0$, we deduce that $f$ has a single non-trivial root for $s\in (0,1]$.

For case (iv), we have two distinct roots in an interval such that $f(s)\geq 0$, one of which is $s_+$, and the other root is labelled as $s_1$. 
Recalling that $s_+$ is also a solution of $f^\prime(s)=0$, we deduce that $s_+$ is a repeated root of $f$. Then, $f$ can be factorised as:
\begin{align}
    f(s)&=(s-s_+)^2(s+s_+)^2(s-s_1)(s+s_1)\nonumber\\
    &=s^6-(2s_+^2+s_1^2)s^4+(s_+^4+2s_1^2s_+^2)s^2-s_1^2s_+^4\label{eq:s_+-repeated}.
\end{align}
Comparing the coefficient of $s^4$ and $s^0$ in \eqref{eq:f}, with \eqref{eq:s_+-repeated}, we have
$s_1^2=2(1-s_+^2)$ and $s_1^2=\frac{8B^2}{\epsilon s_+^4}$, which implies
\begin{equation}
    4B^2+\epsilon s_+^4(s_+^2-1)=0.\label{eq:s_+-repeated2}
\end{equation}
Comparing \eqref{eq:An-1} with \eqref{eq:s_+-repeated2}, we deduce that, 
$s^{\prime\prime}(s_+)=0$. By the uniqueness theory for Cauchy problems, this implies that $s_{sol} \equiv s_+$, which is inadmissible and this case is excluded.
\begin{figure}[ht]
    \centering
    \begin{minipage}{0.29\textwidth}
        \centering
        \includegraphics[width=1.0\textwidth]{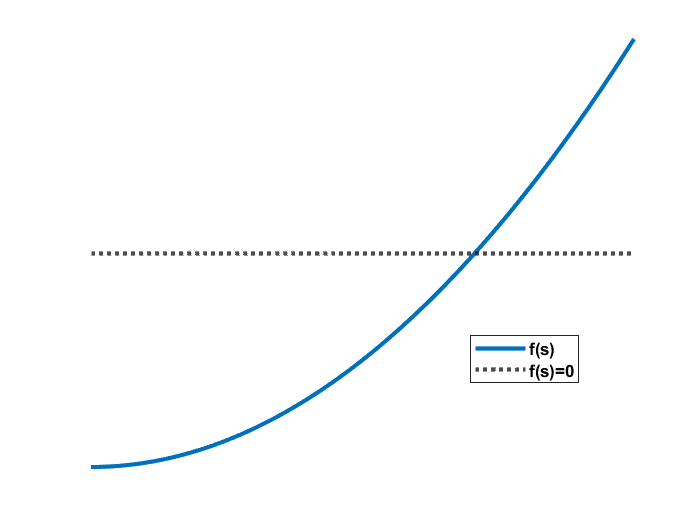}\\
        \ \textrm{Case 1}
    \end{minipage}
    \begin{minipage}{0.32\textwidth}
        \centering
        \includegraphics[width=1.0\textwidth]{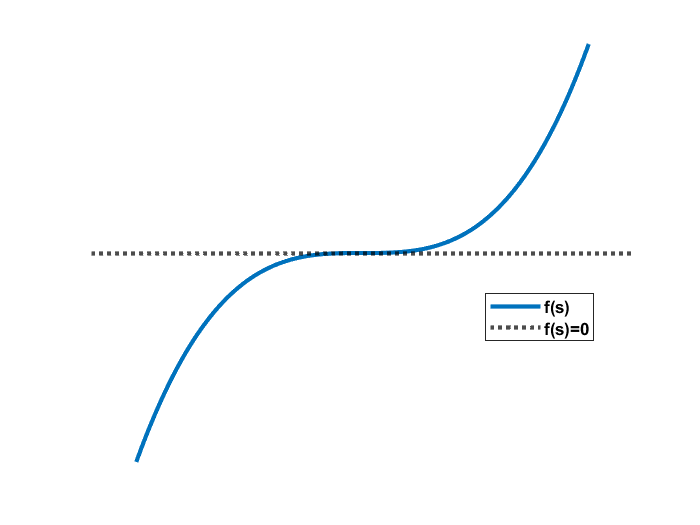}\\
        \ \textrm{Case 2}
    \end{minipage}
    \begin{minipage}{0.37\textwidth}
        \centering
        \includegraphics[width=1.0\textwidth]{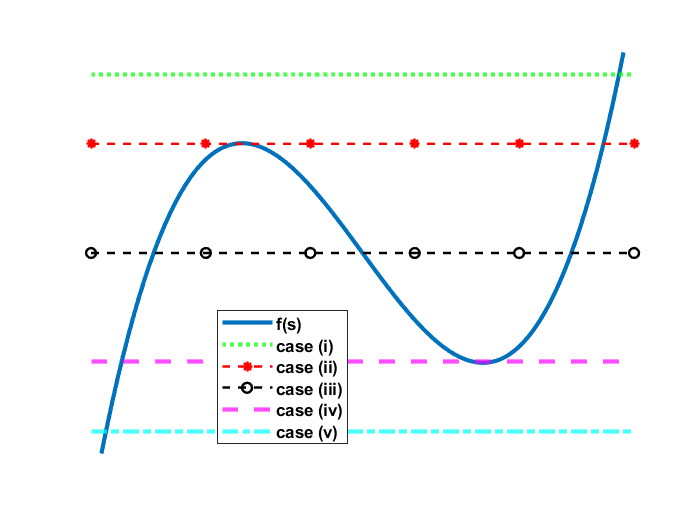}\\
        \ \textrm{Case 3 (b)}
    \end{minipage}
    \caption{The horizontal lines represent $f(s)=0$.}
    \label{fig:case-3}
\end{figure}

In cases 1, 2 and 3, we have demonstrated that 
$s_{sol}$ has a unique positive critical value, which must be the minimum value. 
The unique minimum value is attained at a unique interior point (if there were two interior minima at say $y^*$ and $y^{**}$, a non-constant solution would exhibit a local maximum between the two minima, which is excluded by a unique critical value for $s_{sol}$). This completes the proof. 
\end{proof}

\begin{theorem}\label{thm:uniqueness}
For a given 
$B=\theta^\prime(\pm1)\neq0$, 
the system \eqref{eq:EL-eqtns}, subject to the boundary conditions \eqref{eq:BCs}, admits a unique solution for a fixed $\eps$ and $\omega$. Hence, for any value of $\omega$ that does not permit $OR$ solutions, the system \eqref{eq:EL-eqtns} always has a unique solution. 
\end{theorem}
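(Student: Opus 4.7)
The plan is to reduce the boundary-value problem to a one-parameter family of initial-value problems and exploit the characterisation of the unique minimum from Theorem \ref{thm:symmetry}. First I locate that minimum: by Theorem \ref{thm:symmetry}, any admissible $s$ has a single interior extremum, a global minimum at some $y^* \in (-1, 1)$ with $s'(y^*) = 0$, and the first integral \eqref{eq:s-prime} shows that $s' < 0$ on $(-1, y^*)$ and $s' > 0$ on $(y^*, 1)$. Separating variables over each subinterval,
\begin{equation*}
1 - y^* \;=\; \int_{s_{\min}}^1 \frac{ds}{\sqrt{-4B^2 s^{-2} + \epsilon(s^4/2 - s^2) + A}} \;=\; 1 + y^*,
\end{equation*}
so $y^* = 0$ and $s$ is even in $y$; it therefore suffices to determine $s$ on $[0, 1]$.

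To pin down $s_{\min}$ uniquely, I set up a shooting argument: on $[0, 1]$ the problem is \eqref{eq:An-1} subject to $s'(0) = 0$ and $s(1) = 1$. Parameterising by $\sigma = s(0) \in (0, 1)$, I solve the IVP $s'' = 4B^2/s^3 + \epsilon(s^3 - s)$ with $s(0) = \sigma$, $s'(0) = 0$, and define $\Phi(\sigma) := s(1; \sigma)$; uniqueness of the BVP amounts to $\Phi(\sigma) = 1$ having at most one root. Differentiating the IVP with respect to $\sigma$, the variational function $w := \partial s/\partial \sigma$ satisfies
\begin{equation*}
w'' \;=\; \left( -\frac{12 B^2}{s^4} + \epsilon(3 s^2 - 1) \right) w, \qquad w(0) = 1, \; w'(0) = 0,
\end{equation*}
and the strict monotonicity of $\Phi$ follows once $w(y) > 0$ on $[0, 1]$.

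With $s$ unique, $\theta$ is obtained by integrating $\theta' = B/s^2$ subject to $\theta(-1) = -\omega\pi$, and the remaining boundary condition $\theta(1) = \omega\pi$ becomes the compatibility relation $B \int_{-1}^1 s^{-2}\, dy = 2\omega\pi$. Since $s$ depends only on $B$ and $\epsilon$, this is a scalar equation $\Psi(B) = 2\omega\pi$ whose strict monotonicity in $B$ (argued by an analogous variational analysis to that used for $\Phi$) picks out the unique $B$ associated with each admissible $\omega$. The second assertion of the theorem then follows by combining this uniqueness argument with the existence already supplied by Theorem \ref{thm:uniqueness+OR}.

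The principal obstacle is the sign control required in the shooting step: the coefficient $a(y) = -12 B^2/s^4 + \epsilon(3 s^2 - 1)$ is not sign-definite on $[0, 1]$, so a direct maximum-principle argument for $w$ is unavailable. I would attack this either by a Sturm-type comparison against auxiliary solutions of \eqref{eq:An-1} (notably $s'$ itself, which vanishes at $y = 0$ and supplies a useful reference oscillation), or by leveraging the polynomial factorisation of $f(s)$ from the proof of Theorem \ref{thm:symmetry} to localise the zero set of $a$ and thereby preclude interior zeros of $w$ before $y = 1$. The same machinery should simultaneously yield the monotonicity of $\Psi$ required in the final step.
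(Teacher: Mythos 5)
Your overall architecture (first pin down $s$ for fixed $B$, then determine $B$ from the compatibility relation $B\int_{-1}^1 s^{-2}\,dy = 2\omega\pi$) matches the paper's two-step structure, and your quadrature argument for $y^*=0$ is a legitimate shortcut to the symmetry that the paper only establishes afterwards. However, the proof is not complete, and the gap is exactly where you say it is: the strict monotonicity of the shooting map $\Phi(\sigma)=s(1;\sigma)$ is the entire content of the uniqueness claim for fixed $B$, and you have not established it. The variational coefficient $a(y)=-12B^2/s^4+\epsilon(3s^2-1)$ is indeed not sign-definite, the Sturm comparison against $s'$ is only named rather than carried out (and note $s'$ vanishes at $y=0$ \emph{and} is the derivative of a solution of the nonlinear equation, not of the linearised one, so it does not directly serve as a comparison solution for $w$), and the factorisation of $f$ localises the zeros of $f$, not of $a$. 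The same unproven monotonicity is then invoked a second time for $\Psi(B)$. As it stands, both halves of the argument rest on asserted lemmas.

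The paper avoids this difficulty entirely by never linearising. For fixed $B$, it supposes two solutions with (necessarily distinct) slopes at $y=-1$, say $s_1'(-1)<s_2'(-1)\le 0$, takes the first intersection point $y_0$, and evaluates the first integral \eqref{eq:s-prime} at $y=-1$ and $y=y_0$: the quantity $(s'(-1))^2-(s'(y_0))^2$ equals an expression depending only on $B$, $\epsilon$ and the common values $s(-1)=1$, $s(y_0)=s_0$ (the constants of integration $A_i$ cancel), so it is \emph{identical} for the two solutions, contradicting the strict inequality forced by the ordering of slopes and of the derivatives at $y_0$ (with a separate, equally elementary argument via the $\theta$ boundary condition when the curves only meet at $y=1$). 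For the uniqueness of $B$, the paper obtains the ordering $B_1\le B_2\Rightarrow s_2\le s_1$ by a sub/supersolution comparison and then forces $\theta_1'\equiv\theta_2'$ from $\theta_i(\pm1)=\pm\omega\pi$. If you want to salvage your shooting approach, the cleanest repair is to replace the variational equation for $w$ by this same first-integral identity: two trajectories of the IVP with $s'(0)=0$ and different $\sigma$ satisfy $(s')^2 = -4B^2s^{-2}+\epsilon(s^4/2-s^2)+A(\sigma)$ with distinct $A(\sigma)$, and comparing them at a putative common value $s(1)=1$ yields the monotonicity of $\Phi$ without any sign analysis of $a(y)$.
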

\begin{proof}
    Recall, for $\omega\neq0$, OR solutions exist if and only if $B=0$. When $\omega=0$, \eqref{eq:theta-no-flow} implies we must have $B=0$, the proof of Theorem \ref{thm:maximum_principle} (see supplementary material) then shows the unique solution in $W^{1,2}$ is $(s,\theta)=(1,0)$. 
        For $B\neq 0$, the system \eqref{eq:EL-eqtns} can be written as
\begin{subequations}\label{eq:unique-system}
    \begin{align}
        & s^{\prime\prime}=\frac{4B^2}{s^3}+\eps s(s^2-1),\label{eq:s-eqtn-2}\\
        & s^2\theta^\prime=B.\label{eq:B-non-zero}
    \end{align}
\end{subequations}
Throughout this proof we take $B>0$, so that $s\neq0$ and hence, the right hand side of \eqref{eq:s-eqtn-2} is analytic. The case $B<0$ can be tackled in the same manner.

In the first step, we show that \eqref{eq:unique-system} has a unique solution for fixed $B$, $\eps$ and $\omega$. Assume for contradiction that $(s_1,\theta_1)$ and $(s_2,\theta_2)$ are distinct solutions pairs of \eqref{eq:unique-system}, which satisfy \eqref{eq:BCs}. As such, they must have distinct derivatives at $y=-1$ (otherwise they would satisfy the same Cauchy problem). Suppose W.L.O.G. 
\begin{equation}
    s_1^\prime(-1)<s_2^\prime(-1)\leq 0.\label{eq:derivative-condition-1}
\end{equation}
Since $s_1(1)=s_2(1)=1$, there exists $y_0=\textrm{min}\{y>-1:s_1(y_0)=s_2(y_0):=s_0\}$. Therefore, $s_1<s_2$ for all $y\in(-1,y_0)$. Further, since $s_1$ and $s_2$ have one non-trivial global minimum (Theorem \ref{thm:symmetry}), there are four possibilities for the location of $y_0$: (i) Case I: $y_0 = 1$; (ii) Case II: $y_0 < \min \left\{\alpha, \beta\right\}$ where $s_1$ attains its unique minimum at $y=\alpha$ and $s_2$ attains its unique minimum at $y=\beta$; (iii) Case III: $\alpha \leq y_0 \leq \beta$, or $\beta \leq y_0 \leq \alpha$; and (iv) Case IV: $y_0> \max\left\{\alpha, \beta \right\}$. 
In case I, $s_1<s_2$ implies $\theta^\prime_1>\theta_2^\prime$ for all $y\in(-1,1)$, since both solution pairs satisfy \eqref{eq:B-non-zero}. Hence, $\theta_1(y)-\theta_2(y)$ is increasing, and cannot vanish at $y=1$, contradicting the boundary condition at $y=1$.

For Case II, we have
\[
s_2^\prime(y_0)\leq s_1^\prime(y_0) < 0
\]
so that
\begin{equation*}
 (s_2^\prime(-1))^2-(s_2^\prime(y_0))^2 < (s_1^\prime(-1))^2-(s_1^\prime(y_0))^2.
\end{equation*}
Using \eqref{eq:s-prime}, this is equivalent to
\begin{multline*}
 -4B^2-\frac{\eps}{2}+J_2-\left(- \frac{4B^2}{s_0^2} +\eps s_0^2\left(\frac{s_0^2}{2}-1\right)+J_2\right) <\\ -4B^2-\frac{\eps}{2}+J_1-\left( -\frac{4B^2}{s_0^2} + \eps s_0^2\left(\frac{s_0^2}{2}-1\right)+J_1\right),
\end{multline*}
where $J_1$ and $J_2$ are constants of integration associated with $s_1$ and $s_2$ respectively, and may not be equal. However, the left and right hand sides are in fact equal, yielding the desired contradiction.

For Cases III and IV, there must exist another point of intersection, $y=y_1 \in (\max\left\{\alpha, \beta\right\}, 1]$, such that
\[
\left(s_1 - s_2 \right)(y_1) = 0;\quad \left(s_1 - s_2 \right)^{\prime}(y_1)< 0
\]
and
\[
0 < s_1^{\prime}(y_1) \leq s_2^{\prime}(y_1).
\]
In this case, we can use
\begin{equation*}
 (s_2^\prime(-1))^2-(s_2^\prime(y_1))^2 < (s_1^\prime(-1))^2-(s_1^\prime(y_1))^2
\end{equation*}
to get the desired contradiction. 
We therefore conclude that for fixed $B$, $\eps$ and $\omega$, the solution of (\ref{eq:EL-eqtns}) is unique.

Next, we show the constant $B$ is unique for fixed $\eps$ and $\omega$. We assume that there exist two distinct solution pairs, $(s_1,\theta_1)$ and $(s_2,\theta_2)$, which by the first part of the proof, are the unique solutions of
\begin{align*}
    & s_1^{\prime\prime}=\frac{4B^2_1}{s_1^3}+\eps s_1(s_1^2-1)
    ,\quad s_2^{\prime\prime}=\frac{4B^2_2}{s_2^3}+\eps s_2(s_2^2-1)
\end{align*}
and $
 s_1^2\theta^\prime_1=B_1,s_2^2\theta^\prime_2=B_2,$
respectively, subject to \eqref{eq:BCs}, for the same value of $\omega$. 
Let $0<B_1\leq B_2$. Using a change of variable $u_k = 1 - s_k \in [0, 1)$, for $k=1,2$ so that $u_k(\pm 1) = 0$, we can use the method of sub- and supersolutions to deduce that 
\begin{equation}
    s_2\leq s_1 \textrm{ for all }y\in[-1,1].
\end{equation}
This implies 
\begin{equation}
    \theta^\prime_1=\frac{B_1}{s_1^2}\leq\frac{B_2}{s_2^2}=\theta^\prime_2  \quad \forall y\in [-1,1].
\end{equation}
If $\theta_1^\prime<\theta_2^\prime$ anywhere, then $\theta_1(1)=\omega\pi$ does not hold, hence we must have equality i.e., $\theta_1^\prime=\theta^\prime_2$. It therefore follows that $B_1 s_2^2= B_2 s_1^2$, but the boundary conditions necessitate that $B_1=B_2:=B$ and hence, $s_1=s_2:=s$. 
Finally, integrating $\theta_1^\prime=B/s^2$, it follows that $\theta_1$ is unique and is given by
\begin{equation}
    \theta_1(y)=\omega\pi-\int_y^1\frac{B}{s^2}~\mathrm{d}y, \textrm{ where }B=2\omega\pi \left(\int^{1}_{-1}\frac{1}{s^2}~\mathrm{d}y\right)^{-1}.
\end{equation}
The preceding arguments show that 
$\theta_1=\theta_2$ and the proof is complete. 
\end{proof}

\begin{theorem} \label{thm:symmetryagain}
For $B=\theta^\prime(\pm1)\neq0$, 
the unique solution, $\left(s, \theta\right)$ of (\ref{eq:EL-eqtns}), has the following symmetry properties:
\[
s(y) = s(-y)
\\  \qquad 
\theta(y) = -\theta(-y) 
\]
for all $y \in [-1, 1]$. Then $s$ has a unique non-trivial minimum at $y=0$.
\end{theorem}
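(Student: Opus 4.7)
The plan is to exploit the reflection invariance of the boundary value problem, combined with the uniqueness established in Theorem~\ref{thm:uniqueness}. I would define the reflected pair
\[
\tilde s(y) := s(-y), \qquad \tilde \theta(y) := -\theta(-y),
\]
on $[-1,1]$, and show that $(\tilde s, \tilde \theta)$ solves the same boundary value problem as $(s,\theta)$; then uniqueness forces $(s,\theta) \equiv (\tilde s, \tilde \theta)$, which is exactly the desired symmetry.

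Carrying this out, I would first verify the boundary data: $\tilde s(\pm 1) = s(\mp 1) = 1$ and $\tilde \theta(\pm 1) = -\theta(\mp 1) = \pm \omega \pi$, matching \eqref{eq:BCs}. Differentiating gives $\tilde s''(y) = s''(-y)$ and $\tilde \theta'(y) = \theta'(-y)$, so evaluating \eqref{eq:s-no-flow}-\eqref{eq:theta-no-flow} at the point $-y$ shows that $(\tilde s, \tilde \theta)$ satisfies the same Euler-Lagrange system. The conserved quantity transforms as $\tilde s(y)^2 \tilde \theta'(y) = s(-y)^2 \theta'(-y) = B$, so the same nonzero value of $B$ is inherited. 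Applying Theorem~\ref{thm:uniqueness} (which requires $B \neq 0$) with the same $\eps$, $\omega$ and $B$, I would conclude $(s,\theta) \equiv (\tilde s, \tilde \theta)$, yielding $s(y) = s(-y)$ and $\theta(y) = -\theta(-y)$.

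For the location of the minimum, Theorem~\ref{thm:symmetry} guarantees that $s$ has a unique critical point $y^* \in (-1,1)$, attained as a non-trivial global minimum. The even symmetry of $s$ just proved yields $s'(-y^*) = -s'(y^*) = 0$, so $-y^*$ is also a critical point; by uniqueness $y^* = -y^* = 0$. The only subtle step is sign bookkeeping in the definition of $\tilde \theta$: the two sign flips (one from negating $\theta$, the other from reflecting the argument $y \mapsto -y$) conspire so that $\tilde \theta'(y) = +\theta'(-y)$, which is precisely what preserves the sign of the conserved quantity $B$ and makes the Dirichlet data for $\tilde \theta$ coincide with, rather than flip, those of $\theta$. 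Given this bookkeeping, the argument is essentially an application of the uniqueness theorem and carries no further analytical difficulty.
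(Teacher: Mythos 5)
Your proposal is correct and follows essentially the same route as the paper: the paper's proof also rests on the reflection symmetry of the boundary value problem combined with the uniqueness result of Theorem~\ref{thm:uniqueness} for $B\neq 0$. Your explicit verification that $(\tilde s,\tilde\theta)(y)=(s(-y),-\theta(-y))$ satisfies the same system, boundary data and conserved quantity $B$ is simply a cleanly written version of what the paper leaves as "readily checked."
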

\begin{proof}
It can be readily checked that for $B\neq0$ 
, the system of equations (\ref{eq:EL-eqtns}) admits a solution pair, $(s, \theta)$ such that $s$ is even, and $\theta$ is odd for $y \in [-1, 1]$, compatible with the boundary conditions. Combining this observation with the uniqueness result for $B\neq0$, 
the conclusion of the theorem follows.
\end{proof}

The preceding results apply to non OR-solutions. OR solution-branches have been studied in detail, in a one-dimensional setting, in the $\Qvec$-framework \cite{lamy}. 
Using the arguments in \cite{lamy}, one can prove that for $\omega=\pm\frac{ 1}{4}$, OR solutions exist for all $\eps\geq0$ and are globally stable as $\epsilon \to 0$, but lose stability as $\epsilon$ increases. In particular, non-OR solutions emerge as $\epsilon$ increases, for $\omega = \pm \frac{1}{4}$, and these non-OR solutions do not have polydomain structures. More precisely, we can explicitly compute limiting profiles in the $\eps \to 0$ and $\eps \to \infty$ limits. These calculations (which yield good insight into the more complex cases of non-constant velocity and pressure for passive and active nematodynamics considered next) can be found in the supplementary material (\cite{fang2019},\cite{han2020siap},\cite{braides} are associated new references appearing in the supplementary material).

\section{Passive and Active flows}
\label{sec:passive+active}
In this section, we compute asymptotic expansions for OR-type solutions of the system \eqref{eq:flow-system}, in the $L^*\to 0$ limit ($\epsilon \to \infty$ limit) relevant to micron-scale channels. We consider conventional passive nematodynamics and active nematodynamics (with additional stresses generated by internal activity), and generic scenarios with non-constant velocity and pressure.
We follow the asymptotic methods in \cite{CM} to construct OR-type solutions, strongly reminiscent of chevron patterns seen in experiments \cite{agha, copar}. Recall an OR-type solution is simply a solution of \eqref{eq:flow-system} with a non-empty nodal set for the scalar order parameter, such that $\theta$ has a planar jump discontinuity at the zeroes of $s$. 
Unlike OR solutions, OR-type solutions need not have polydomains with constant $\theta$-profiles. 




\subsection{Asymptotics for OR-type solutions in passive nematodynamics, in the \texorpdfstring{$L^*\to0$}{Lg} limit}
\label{sec:passive-asymptotics}
Consider the system, \eqref{eq:flow-system}, in the $L^*\to0$ limit. Motivated by the results of section \ref{sec:constant-pressure-flow}, and for simplicity, we assume $s$ attains a single minimum at $y=0$, $s$ is even and $\theta$ is odd, throughout this section.
The first step is to calculate the flow gradient $u_y$. We multiply \eqref{eq:theta-eqtn} by \textcolor{red}{$s$} so that 
\begin{equation}
    (s^2\theta_y)_{y}=\frac{s^2}{2}u_y.\label{eq:sec3-1}
\end{equation}
Substituting $(s^2\theta_y)_y$ from \eqref{eq:sec3-1} into \eqref{eq:u-eqtn}, we obtain
\begin{equation}
    \left(u_y+\frac{L_2}{2}s^2u_y\right)_y=p_x. \label{eq:sec3-2}
\end{equation}
Both sides of \eqref{eq:sec3-2} equal a constant, since the left hand side is independent of $x$, and $p_x$ is independent of $y$. Integrating \eqref{eq:sec3-2},
we find
\begin{equation}
    u_y=\frac{p_xy}{g(s)}+\frac{B_0}{g(s)},\label{eq:u_y}
\end{equation}
where $B_0$ is another constant and
\begin{equation}
    g(s)=1+\frac{L_2}{2}s^2>0,\;\forall s\in\mathbb{R}. \label{eq:g}
\end{equation}

Integrating \eqref{eq:u_y}, we have
\begin{equation}
    u(y)=\int^y_{-1}\frac{p_x Y}{g(s(Y))}+\frac{B_0}{g(s(Y))}~\mathrm{d}Y,\label{eq:pois-u-1}
\end{equation}
since $u(-1)=0$ from \eqref{eq:poiseuille}. Using the no-slip condition, $u(1)=0$ and the fact that $\int^1_{-1}\frac{Y}{g(s(Y))}~\mathrm{d}Y=0$, we obtain $B_0=0$ so that the flow velocity is given by
$   u(y)=\int^y_{-1}\frac{p_x Y}{g(s(Y))}~\mathrm{d}Y,$
and the corresponding velocity gradient is
\begin{equation}
    u_y(y)=\frac{p_xy}{g(s)}.\label{eq:poiseuille-gradient}
\end{equation}


Following the method in \cite{CM}, we  assume 
\begin{subequations}
\begin{align}
& s(y)=S(y)+IS(\lambda)+\mathcal{O}(L^*),\label{eq:s-expansion}\\
& \theta(y)=\Theta(y)+I\Theta(\lambda)+\mathcal{O}(L^*)\label{eq:theta-expansion},
\end{align}
\end{subequations}
where $S,\Theta$ represent the outer solutions away from the jump point at $y=0$, $IS,I\Theta$ represent the inner solutions around $y=0$, and $\lambda$ is our inner variable. Substituting these expansions into \eqref{eq:s-eqtn} and \eqref{eq:theta-eqtn} yields 
\begin{subequations}
\begin{align}
& L^*S_{yy}+L^*IS_{yy}=4L^*(S+IS)(\Theta_{y}+I\Theta_y)^2+(S+IS)((S+IS)^2-1),\label{eq:inner-s-eqtn}\\
& (S+IS)(\Theta_{yy}+I\Theta_{yy})=\frac{1}{2}(S+IS)u_y(y)-2(S_{y}+IS_{y})(\Theta_y+I\Theta_y)\label{eq:inner-theta-eqtn}.
\end{align}
\end{subequations}
It is clear that \eqref{eq:inner-s-eqtn} is a singular problem in the $L^* \to 0$ limit, and as such we rescale $y$ and set
\begin{equation}
    \lambda=\frac{y}{\sqrt{L^*}},\label{eq:dominant-balance}
\end{equation}
to be our inner variable. 

The outer solution is simply the solution of \eqref{eq:inner-s-eqtn} and \eqref{eq:inner-theta-eqtn}, away from $y=0$, for $L^*=0$ and when internal contributions are ignored.
In this case, \eqref{eq:inner-s-eqtn} reduces to
\begin{eqnarray}
 S(S^2-1)=0,
\end{eqnarray}
which implies
\begin{equation}
\label{eq:s-outer}
    S(y)=
    1,\quad\textrm{for $y\in[-1,0)\cap(0,1]$ }
\end{equation}
is the outer solution. Here we have ignored the trivial solution $S=0$, and $S=-1$, as these solutions do not satisfy the boundary conditions. 

Ignoring internal contributions, 
\eqref{eq:inner-theta-eqtn} reduces to 
\begin{equation}
    \Theta_{yy}(y)=\frac{1}{2}u_y(y)\quad\textrm{for $y\in[-1,0)\cap(0,1]$}\label{eq:outer-theta-eqtn}.
\end{equation}
From the above, $s=1$ for $y\in[-1,0)\cap(0,1]$, therefore, integrating \eqref{eq:poiseuille-gradient} 
and imposing the no-slip boundary conditions \eqref{eq:poiseuille}, we obtain
\begin{align}
    u(y)=\frac{p_x}{2+L_2}(y^2-1)\label{eq:u-asymptotic}.
\end{align}
We take $u(0)=-\frac{p_x}{2+L_2}$, consistent with the above expression.
Solving for $0<y\leq1$, we integrate \eqref{eq:outer-theta-eqtn} to obtain
\begin{align}
    &\Theta_{y}(y)=\int_0^y\frac{u_y(Y)}{2}~dY+\Theta_y(0+)\nonumber\\
    &\implies\Theta_{y}(y)=\frac{u(y)-u(0)}{2}+\Theta_y(0+)\label{eq:outer-theta_y}.
\end{align}
Similarly, for $-1\leq y<0$, integrating \eqref{eq:outer-theta-eqtn} yields
\begin{align}
    &\Theta_{y}(y)=\frac{u(y)-u(0)}{2}+\Theta_y(0-).\label{eq:outer-theta_y-2}
\end{align}

Since $\Theta_y(0\pm)$ is unknown, we enforce the following boundary conditions at $y=0$ to give us an explicitly computable expression
\begin{subequations} \label{eq:jump_k}
\begin{align}
    & \Theta(0+)=\omega\pi -\frac{k\pi}{2},\; k\in\mathbb{Z},\\
    & \Theta(0-)=-\omega\pi +\frac{k\pi}{2},\; k\in\mathbb{Z}.
\end{align}
\end{subequations}
We now justify this jump condition. In the case of constant flow and pressure, OR solutions jump by $\pm 2 \omega\pi$, but OR-type solutions could have different jump conditions across the domain walls, 
hence the inclusion of the $\frac{k\pi}{2}$ term (other jump terms are also possible). Substituting \eqref{eq:u-asymptotic} into \eqref{eq:outer-theta_y}, integrating, and imposing the boundary conditions, we have that
\begin{align}
   \Theta(y)=\frac{p_x}{(2+L_2)}\left(\frac{y^3}{6}-\frac{y}{6}\right)
   +\frac{k\pi}{2}(y-1)+\omega\pi\quad\textrm{for $y\in(0,1]$}.\label{eq:theta-outer-y-positve}
\end{align}
Analogously, \eqref{eq:outer-theta_y-2} yields
\begin{equation}
    \Theta(y)=\frac{p_x}{(2+L_2)}\left(\frac{y^3}{6}-\frac{y}{6}\right)+\frac{k\pi}{2}(y+1)
    -\omega\pi\quad\textrm{for $y\in[-1,0)$}.\label{eq:theta-outer-y-negative}
\end{equation}

We now compute the inner solution. Substituting the inner variable \eqref{eq:dominant-balance} into \eqref{eq:inner-s-eqtn} and \eqref{eq:inner-theta-eqtn}, 
they become
\begin{eqnarray*}
&& L^*S_{yy}+\ddot{IS}=4L^* (S+IS)\left(\Theta_{y}+\frac{\dot{I\Theta}}{\sqrt{L^*}} \right)^2+(S+IS)((S+IS)^2-1),\\
&& (S+IS)(L^*\Theta_{yy}+\ddot{I\Theta})=\frac{L^*}{2}(S+IS)u_y(\lambda \sqrt{L^*})-2L^*\left(S_{y}+\frac{\dot{IS}}{\sqrt{L^*}}\right)\left(\Theta_y+\frac{\dot{I\Theta}}{\sqrt{L^*}}\right),
\end{eqnarray*}
where $\dot{()}$ denotes differentiation w.r.t $\lambda$.
Letting $L^*\to0$, we have that the leading order equations are
\begin{subequations}
\begin{align}
& \ddot{IS}=4(S+IS)(\dot{I\Theta})^2+(S+IS)((S+IS)^2-1),\\
& (S+IS)\ddot{I\Theta}=
-2\dot{IS}\dot{I\Theta},
\end{align}
\end{subequations}
or equivalently, after recalling $S=1$,
\begin{eqnarray*}
&& \ddot{IS}=2IS+q_1(IS,\dot{I\Theta}), \quad
 \ddot{I\Theta}=
q_2(IS,I\dot{S},\dot{I\Theta},\ddot{I\Theta}),
\end{eqnarray*}
where $q_1,q_2$ represent the nonlinear terms of the equation. The linearised system is
\begin{subequations}
\begin{align}
& \ddot{IS}=2IS,\label{eq:s-linearised-2}\\
& \ddot{I\Theta}=0,\label{eq:theta-linearised-2}
\end{align}
\end{subequations}
subject to the boundary and matching conditions
\begin{subequations}
\begin{align}
& \underset{\lambda\to\pm\infty}{\textrm{lim}}IS(\lambda)=0,\;IS(0)=s_{min}-1,\label{eq:final-bcs-2}\\
& \underset{\lambda\to\pm\infty}{\textrm{lim}}I\Theta(\lambda)=0\label{eq:final-bcs-theta-2},
\end{align}
\end{subequations}
where $s_{min}\in[0,1]$, is the minimum value of $s$. We note that the second condition in \eqref{eq:final-bcs-2} ensures $s(0)=s_{min}$.
 Using the conditions (\ref{eq:final-bcs-2}), the solution of \eqref{eq:s-linearised-2} is
\begin{equation}
\label{eq:s-comp}
    s(y)=
    \begin{cases}
    1+(s_{min}-1)e^{-\sqrt{2}\frac{y}{\sqrt{L^*}}}&\quad\textrm{for $0\leq y\leq1$}\\
    1+(s_{min}-1)e^{\sqrt{2}\frac{y}{\sqrt{L^*}}}&\quad\textrm{for $-1\leq y\leq 0$}.
    \end{cases}
\end{equation}
With $IS$ determined, we calculate $I\Theta$. 
Solving (\ref{eq:theta-linearised-2}) subject to the limiting conditions \eqref{eq:final-bcs-theta-2}, it is clear that $I\Theta=0$. Hence,
\begin{align}\label{eq:theta-asymptotic}
    \theta(y)=\begin{cases}
    &\frac{p_x}{(2+L_2)}\left(\frac{y^3}{6}-\frac{y}{6}\right)+\frac{k\pi}{2}(y-1)+\omega\pi
    \quad\textrm{for $0<y\leq1$}\\
    &\frac{p_x}{(2+L_2)}\left(\frac{y^3}{6}-\frac{y}{6}\right)
   +\frac{k\pi}{2}(y+1)-\omega\pi
    \quad\textrm{for $-1\leq y<0$}.
    \end{cases}
\end{align}
The expressions, \eqref{eq:s-comp} and \eqref{eq:theta-asymptotic}, are consistent with our definition of an OR-type solution.

\subsection{Asymptotics for OR-type solutions in active nematodynamics, in the \texorpdfstring{$L^*\to0$}{Lg} limit}\label{sec:active}
Next, we consider an active nematic system in a channel geometry, i.e., a system that is constantly driven out of equilibrium by internal stresses and activity \cite{Giomo-2012}. There are three dependent variables to solve for: the concentration, $c$, of active particles, the fluid velocity $\mathbf{u}$, and the nematic order parameter $\Qvec$. The corresponding evolution equations are taken from  \cite{active-defects, Giomo_annihilation}, with additional \emph{active stresses} from the self-propelled motion of the active particles and the non-equilibrium intrinsic activity:
\begin{subequations}\label{eq:active_system}
\begin{align}
    & \frac{Dc}{Dt}=\nabla\cdot\left(\mathbf{D}\nabla c +\alpha_1 c^2 (\nabla\cdot\Qvec)\right),\label{eq:c_dimensional_eqtn}\\
    & \nabla\cdot\mathbf{u}=0,\quad\rho\frac{D\mathbf{u}}{Dt}=-\nabla p +\nabla\cdot(\mu(\nabla \mathbf{u}+(\nabla\mathbf{u})^T)+\tilde{\sigma}),\\
    & 
    \frac{D\Qvec}{Dt}=\lambda s \mathbf{W}+
    \zeta\Qvec-\Qvec\zeta+\frac{1}{\gamma}\mathbf{H},
\end{align}
\end{subequations}
where $\mathbf{W}$  
is the symmetric part of the velocity gradient tensor, $D_{ij}=D_0 \delta_{ij}+D_1 Q_{ij}$ is the anisotropic diffusion tensor ($D_0=(D_\parallel+D_\perp)/2$, $D_1=D_\parallel -D_\perp$ and $D_\parallel$ and $D_\perp$ are, respectively, the bare diffusion
coefficients along the parallel and perpendicular directions of the director field), $\alpha_1$ is an activity parameter, and $\lambda$ is the nematic alignment parameter,  which characterizes the relative dominance of the strain and the vorticity in affecting the alignment of particles with the flow \cite{review}. For $|\lambda| < 1$, the rotational part of the flow dominates, while for $|\lambda|>1$, the director will tend to align at a unique angle to the flow direction \cite{edwards_active}.
The value of $\lambda$ is also determined by the shape of the active particles \cite{giomi_sheared}. 
The stress tensor, $\tilde{\sigma}=\sigma^e+\sigma^a$ \cite{excitable},  is the sum of an elastic stress due to nematic elasticity 
\begin{equation}
    \sigma^e=-\lambda s \mathbf{H}+
    \Qvec\mathbf{H}-\mathbf{H}\Qvec,
\end{equation}
and an active stress defined by
\begin{equation}
    \sigma^a=\alpha_2 c^2\Qvec.
\end{equation}
Here $\alpha_2$ is a second activity parameter, which describes extensile (contractile) stresses exerted by the active particles when $\alpha_2<0$ ($\alpha_2>0$). $\mathbf{H}$, $\mu$, $\xi$, $p$ and $\rho$, are as introduced in \Cref{sec:theory}.

We again consider a one-dimensional static problem, with a unidirectional flow in the $x$ direction and take $\lambda=0$ for simplicity and in order to focus on the effect of other parameters relevant to this study. 
Then the evolution equations for $\mathbf{Q}$ are the same as those considered in the passive case, hence, making it easier to adapt the calculations in section \ref{sec:passive-asymptotics} and draw comparisons between the passive and active cases. 
The isotropic to nematic phase transition is driven by the concentration of active particles and as such, we take $A=\kappa(c^*-c)/2$ and $C=\kappa c$, 
where  $c^*=\sqrt{3\pi/2L^2}$ is the critical concentration at which this transition occurs \cite{Giomo-2012,active-defects}. As in the passive case, we work with $A<0$ i.e. with concentrations that favour nematic ordering.

The continuity equation \eqref{eq:c_dimensional_eqtn}, follows from the fact that the total number of active particles must remain constant \cite{Giomo-2012}. This is compatible with constant concentration, $c$, although solutions with constant concentration do not exist for $\alpha_1 \neq 0$. 
We consider the case of constant concentration $c$, which is not unreasonable for small values of $\alpha_1$ and certain solution types (see supplementary material for further details), and do not consider the concentration equation, \eqref{eq:c_dimensional_eqtn}, in this work.
We nondimensionalise the system as before, but additionally scale $c$ and $c^*$ by $L^{{-2}}$ (e.g, $c=L^{-2}\Tilde{c}$, where $\Tilde{c}$ is dimensionless). In terms of $\Qvec$, the evolution equations are given by
\begin{subequations}\label{eq:Q-active}
    \begin{align}
        & \frac{\partial Q_{11}}{\partial t}=u_y Q_{12}+ Q_{11,yy}+\frac{1}{L^*}Q_{11}(1-4(Q_{11}^2+Q_{12}^2)), \\
        & \frac{\partial Q_{12}}{\partial t}=-u_y Q_{11}+ Q_{12,yy}+\frac{1}{L^*}Q_{12}(1-4(Q_{11}^2+Q_{12}^2)), \\
        & L_1\frac{\partial u}{\partial t}=-p_x+u_{yy}+2L_2(Q_{11}Q_{12,yy}-Q_{12}Q_{11,yy})_y+\Gamma(Q_{12}c^2)_y,
    \end{align}
\end{subequations}
where 
$\Gamma=\frac{\alpha_2\gamma}{\kappa\mu\textcolor{red}{L^2}}\sqrt{-\frac{2A}{C}}$ is a measure of activity.
In the steady case, and in terms of $(s,\theta)$, the system \eqref{eq:Q-active} reduces to
\begin{subequations}
    \begin{align}
    & s_{yy}=4s\theta^2_y+\frac{s}{L^*}\left(s^2-1\right),\label{eq:active-s}\\
    & s\theta_{yy}=\frac{1}{2}su_y-2s_y\theta_y,\label{eq:active-theta}\\
    & u_{yy}=p_x-L_2(s^2\theta_y)_{yy}-\Gamma\left( \frac{c^2 
    s}{2}\sin(2\theta)\right)_y.\label{eq:active-u1}
    \end{align}
\end{subequations}
Regarding boundary conditions, we impose the same boundary conditions on $s$, $\theta$ and $u$, as in the passive case.

The equations, \eqref{eq:active-s} and \eqref{eq:active-theta}, are identical to the equations, \eqref{eq:s-eqtn} and \eqref{eq:theta-eqtn}, respectively. Hence, the asymptotics in subsection \ref{sec:passive-asymptotics} remain largely unchanged, with differences coming from \eqref{eq:active-u1}, due to the additional active stress. Skipping technical details which are analogous to those in \Cref{sec:passive-asymptotics}, we find the fluid velocity is given by
\begin{equation}
    u(y)=\int^y_{-1}\frac{2p_x Y-\Gamma c^2s(Y)\sin(2\theta(Y))}{2g(s(Y))}dY.\label{eq:u-active}
\end{equation}

Following methods in subsection \ref{sec:passive-asymptotics}, we pose asymptotic expansions as in \eqref{eq:s-expansion} and \eqref{eq:theta-expansion}, for $s$ and $\theta$ respectively in the $L^* \to 0$ limit, which yields \eqref{eq:inner-s-eqtn} and \eqref{eq:inner-theta-eqtn}.
In fact, the expression for $s$ is given by \eqref{eq:s-comp}, in the active case as well. 
For $\Theta$, we again solve \eqref{eq:outer-theta-eqtn}  and find an implicit representation as given below:
\begin{equation}
    \Theta(y)=
    \begin{cases}
    & \int_y^{1}\frac{u(0)-u(Y)}{2}dY
    +\left(\frac{k\pi}{2}-\int_0^1\frac{u(Y)-u(0)}{2}dY\right)(y-1)+\omega\pi,\;0<y\leq 1 \\
    & \int_{-1}^{y}\frac{u(Y)-u(0)}{2}dY
    +\left(\frac{k\pi}{2}-\int_{-1}^0\frac{u(Y)-u(0)}{2}dY\right)(y+1)-\omega\pi,\;-1\leq y<0
    \end{cases}\label{eq:theta-active-expansion}
\end{equation}
where $u(y)$ is given by \eqref{eq:u-active}. 
Moving to the inner solution $I\Theta$, we need to solve \eqref{eq:theta-linearised-2}, subject to the matching condition \eqref{eq:final-bcs-theta-2}.  
As before, we find $I\Theta=0$, and our composite expansion for $\theta$ is just the outer solution presented above.  
We deduce that OR-type solutions are still possible in an active setting, for the case $\lambda=0$.

We now consider a simple case for which  \eqref{eq:theta-active-expansion} can be solved explicitly. 
In \eqref{eq:u-active}, we assume $s=1$ and $\sin2\theta=1$ for $-1\leq y<0$, and $\sin(2\theta)=-1$ for $0<y\leq 1$ i.e., we assume an OR solution with $\theta=\mp\frac{\pi}{4}$ and $\omega=-\frac{1}{4}$. Under these assumptions, \eqref{eq:u-active} 
yields
\begin{equation}
    u(y)=
    \begin{cases}
    &\frac{p_x }{2+L_2}(y^2-1)+\frac{\Gamma c^2}{2+L_2}(y-1),\quad \textrm{for }0<y\leq 1 \\
    & \frac{p_x }{2+L_2}(y^2-1)-\frac{\Gamma c^2}{2+L_2}(y+1),\quad \textrm{for }-1\leq y< 0. 
    \end{cases}\label{eq:u-active-asymptotic}
\end{equation}
Substituting the above into \eqref{eq:theta-active-expansion}, we find
\begin{equation}\label{eq:theta-active-explicit}
    \theta(y)=
    \begin{cases}
    & \frac{p_x }{2+L_2}\left(\frac{y^3}{6}-\frac{y}{6}\right)+\frac{\Gamma c^2}{2+L_2}(\frac{y^2}{4}-\frac{y}{4})
    +\frac{k\pi}{2}(y-1)+\omega\pi,\quad\textrm{for $0<y\leq 1$}\\
    & \frac{p_x }{2+L_2}\left(\frac{y^3}{6}-\frac{y}{6}\right)-\frac{\Gamma c^2}{2+L_2}(\frac{y^2}{4}+\frac{y}{4})
    +\frac{k\pi}{2}(y+1)-\omega\pi\quad\textrm{for $-1\leq y<0$}.
    \end{cases}
\end{equation}
We expect (\ref{eq:u-active-asymptotic}) and (\ref{eq:theta-active-explicit}) to be good approximations to OR-type solutions with $\omega =-\frac{1}{4}$, in the limit of small $\Gamma$ (small activity)  and small pressure gradient, when the outer solution is well approximated by an OR solution.

\subsection{Numerical results}\label{sec:numerics}
We solve the dynamical systems \eqref{eq:Q-flow} and \eqref{eq:Q-active} with finite element methods, and all simulations are performed using the open-source package FEniCS~\cite{logg2012automated}. The details of the numerical methods are given in the supplementary material. 
In the numerical results that follow, we extract the $s$ profile from $\Qvec$, using \eqref{eq:s-theta-relationship}. 

\subsubsection{Passive flows}
We begin by investigating whether OR-type solutions exist for the passive system \eqref{eq:Q-flow} when $L^*$ is large (small $\epsilon$), that is, for small nano-scale channel domains. 
When $\omega=\pm\frac{1}{4}$ and $p_x=-1$, we find profiles which are small perturbations of the limiting OR solutions reported in the supplementary material, for large $L^*$ and $p_x = 0$, i.e., (2.7a), (2.7b) in the supplementary material 
 when $\omega=\pm\frac{1}{4}$ (see Fig. \ref{fig:L_infinty}).
We regard these profiles as being OR-type solutions although $s(0) \neq 0$ but $s(0)\ll 1$, as the director profile resembles a polydomain structure and $\theta$ jumps around $y=0$, to satisfy its boundary conditions. As $|p_x|$ increases, we lose this approximate zero in $s$, i.e., we lose the domain wall and $s \to 1$ almost everywhere. 

\begin{figure}[ht]
    \centering
    \begin{minipage}{1.0\textwidth}
        \centering
        \includegraphics[width=1\textwidth]{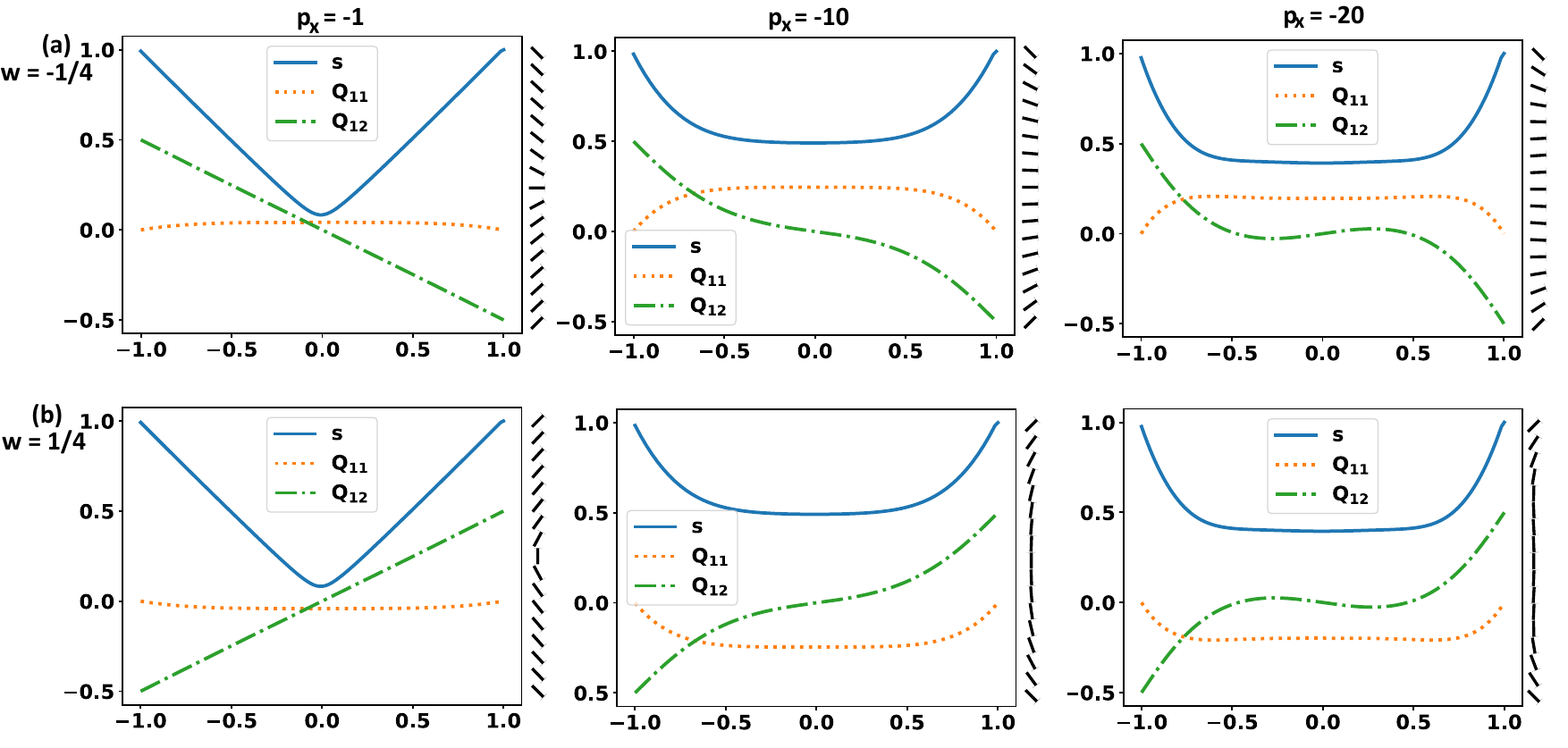}\\
    \end{minipage}
    \caption{The stable solutions of \eqref{eq:Q-flow} for  $L^*=\infty$ (i.e., we remove the bulk contributions) and $L_2=1e-3$. The values of $p_x$ and $\omega$, are indicated in the plots (the same comments apply to all other figures where values are included in the plots).}
    \label{fig:L_infinty}
\end{figure}

We now proceed to study solutions of (\ref{eq:Q-flow}) in the $L^*\to 0$ limit, relevant for micron-scale channel domains. We study the stable equilibrium solutions, the existence of OR-type solutions in this limit, and how well the OR-type solutions are approximated by the asymptotic expansions in Section~\ref{sec:passive-asymptotics}. 
As expected, in Fig. \ref{fig:L_0-stable-solutions} 
we find stable equilibria which satisfy $s=1$ almost everywhere and report unstable OR-type solutions in Fig. \ref{fig:OR-flow}, when $\omega=-\frac{1}{4}$. We again consider these to be OR-type solutions despite $s(0)\neq0$, since their behaviour is consistent with the asymptotic expressions \eqref{eq:s-comp} and \eqref{eq:theta-asymptotic}, and we also have approximate polydomain structures. We also find these OR-type solutions for $\omega=\frac{1}{4}$, but do not report them as they are similar to the $\omega=-\frac{1}{4}$ case (the same is true in the next subsection). In fact, $\omega=\pm\frac{1}{4}$ are the only boundary conditions for which we have been able to identify OR-type solutions (identical comments apply to the active case).

In Fig. \ref{fig:OR-flow}, we present three distinct OR-type solutions which vary in their $Q_{11}$ and $Q_{12}$ profiles, or equivalently the rotation of $\theta$ between the bounding plates at $y=\pm 1$. These numerical solutions are found by taking \eqref{eq:s-comp} (with $s_{min}=0$) and \eqref{eq:theta-asymptotic} with different values of $k$ ($k=0,1,2$), as the initial condition in our Newton solver. We conjecture that one could build a hierarchy of OR-type solutions corresponding to arbitrary integer values of $k$ in (\ref{eq:jump_k}), or different jumps in $\theta$ at $y=0$ in (\ref{eq:jump_k}), when $\omega = \pm \frac{1}{4}$.  
OR-type solutions are unstable, and we speculate that the solutions corresponding to different values of $k$ in (\ref{eq:jump_k}) are unstable equilibria with different Morse indices, where the Morse index is a measure of the instability of an equilibrium point \cite{han_nonlinearity}. A higher value of $k$ could correspond to a higher Morse index or informally speaking, a more unstable equilibrium point with more directions of instability. 
A further relevant observation is that according to the asymptotic expansion \eqref{eq:theta-asymptotic},  $Q_{11}(0\pm)=0$ and $Q_{12}(0\pm)=\pm \frac{1}{2}$, and hence the energy of the domain wall does not depend strongly on $k$. The far-field behavior does depend on $k$ in \eqref{eq:theta-asymptotic}, and we conjecture that this $k$-dependence generates the family of $k$-dependent OR-type solutions. 
We note that OR-type solutions generally do not satisfy $s(0) = 0$, but $s(0) \to 0$ 
as $L^*$ decreases, for a fixed $p_x$ (see Fig. \ref{fig:effect-of-decreasing-L}).  
\begin{figure}[ht]
    \centering
    \begin{minipage}{1.0\textwidth}
        \centering
        \includegraphics[width=1.0\textwidth]{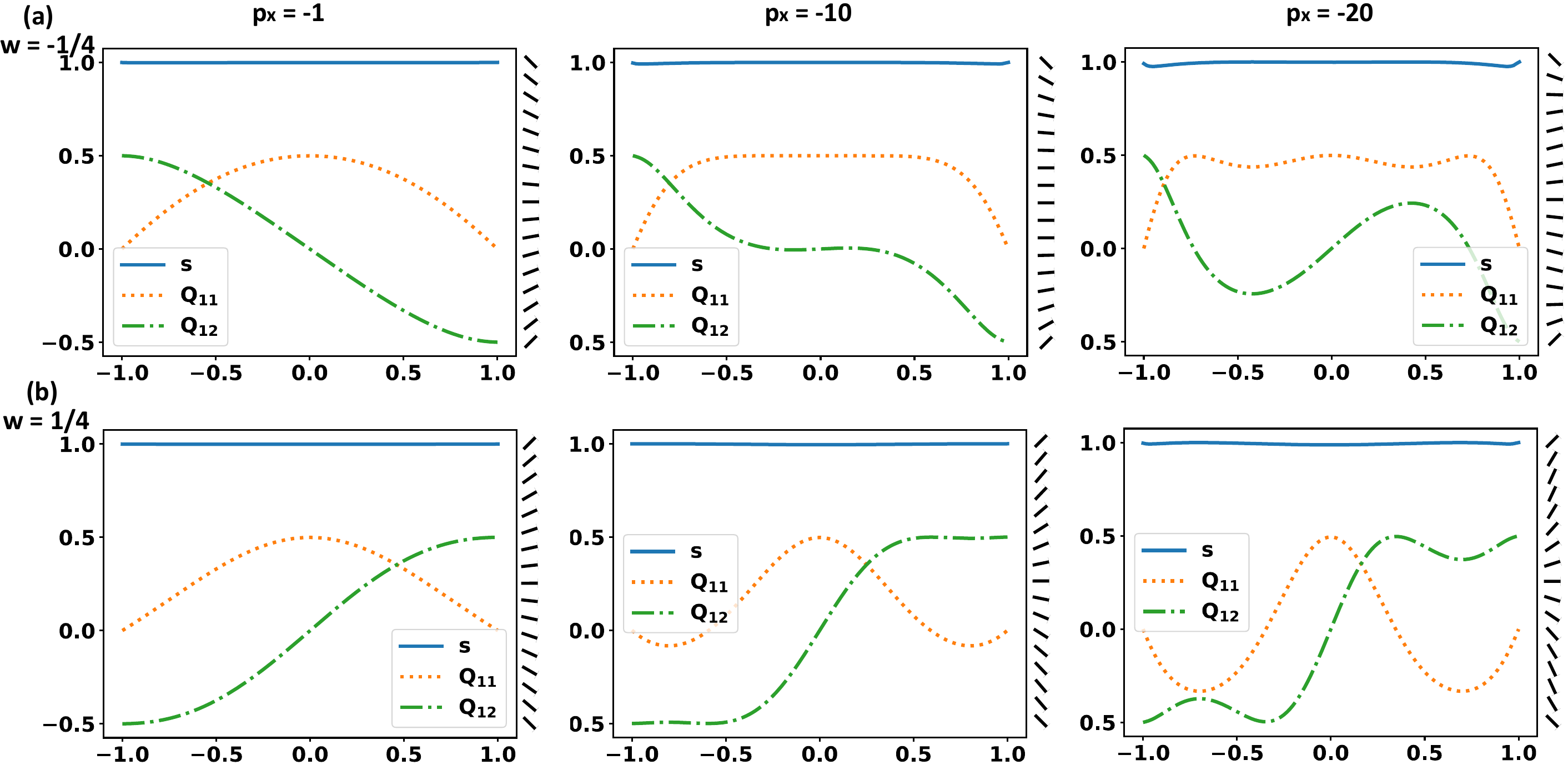}\\
    \end{minipage}
    \caption{Some example stable solutions of \eqref{eq:Q-flow} for $L^*=1e-3$ and $L_2=1e-3$. 
    }
    \label{fig:L_0-stable-solutions}
\end{figure}

To conclude this section on passive flows, we assess the accuracy of our asymptotic expansions in section~\ref{sec:passive-asymptotics}. In Fig. \ref{fig:error-plot}, we plot the error between the asymptotic expressions (\eqref{eq:s-comp} and \eqref{eq:theta-asymptotic}) and the corresponding numerical solutions of \eqref{eq:Q-flow}, for the parameter values $L^*=1e-4$, $L_2=1e-3$, $p_x=-20$ and $\omega=-\frac{1}{4}$. More precisely, we use these parameter values along with $k=1,2,3$ in \eqref{eq:theta-asymptotic}, and \eqref{eq:s-comp} with $s_{min}=0$, to construct the asymptotic profiles. We then use these asymptotic profiles as initial conditions to find the corresponding numerical solutions. Hence, we have three comparison plots in Fig. \ref{fig:error-plot}, corresponding to $k=1,2,3$ respectively.
By error, we refer to the difference between the asymptotic profile and the corresponding numerical solution. We label the asymptotic profiles using the superscript $0$, in the $L^*\to 0$ limit,  whilst a nonzero superscript identifies the numerical solution along with the value of $L^*$ used in the numerics (these comments also apply to the active case in the next section). We find good agreement between the asymptotics and numerics, especially for the $s$ profiles, where any error is confined to a narrow interval around $y=0$ and does not exceed $0.07$ in magnitude. Using \eqref{eq:Q-components}, \eqref{eq:s-comp}, and \eqref{eq:theta-asymptotic}, we construct the corresponding asymptotic profile $\Qvec^0$. Looking at the differences between $\Qvec^0$ and the numerical solutions $\Qvec^{1e-4}$ (for $k=1,2,3$), the error does not exceed $0.06$ in magnitude. This implies good agreement between the asymptotic and numerically computed $\theta$-profiles, at least for the parameter values under consideration. While the fluid velocity $u$ is not the focus of this work, we note that our asymptotic profile \eqref{eq:u-asymptotic}, gives almost perfect agreement with the numerical solution for $u$.

\begin{figure}[ht]
    \centering
    \begin{minipage}{1.0\textwidth}
        \centering
        \includegraphics[width=1.0\textwidth]{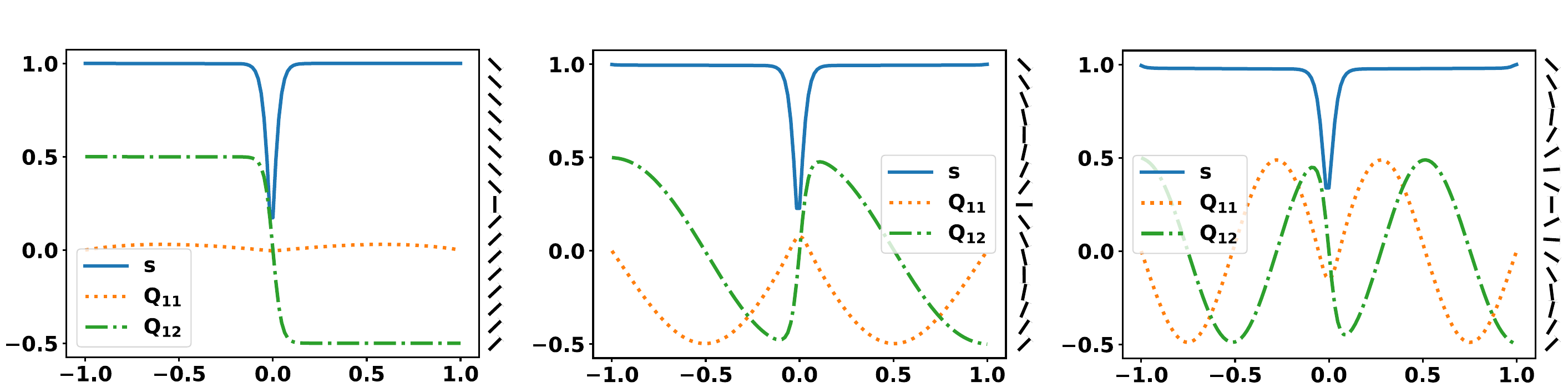}\\
    \end{minipage}
    \caption{Three unstable OR-type solutions (in the sense that they have transition layer profiles for $s$) of \eqref{eq:Q-flow} for $L^*=1e-3$, $L_2=1e-3$, $p_x=-1$ and $\omega=-\frac{1}{4}$. The initial conditions used are \eqref{eq:s-comp} (with $s_{min}=0$) and \eqref{eq:theta-asymptotic} with $k=0,1,2$ (from left to right), along with the parameter values just stated.}
    \label{fig:OR-flow}
\end{figure}

\begin{figure}[ht]
    \centering
    \begin{minipage}{1.0\textwidth}
        \centering
        \includegraphics[width=1.0\textwidth]{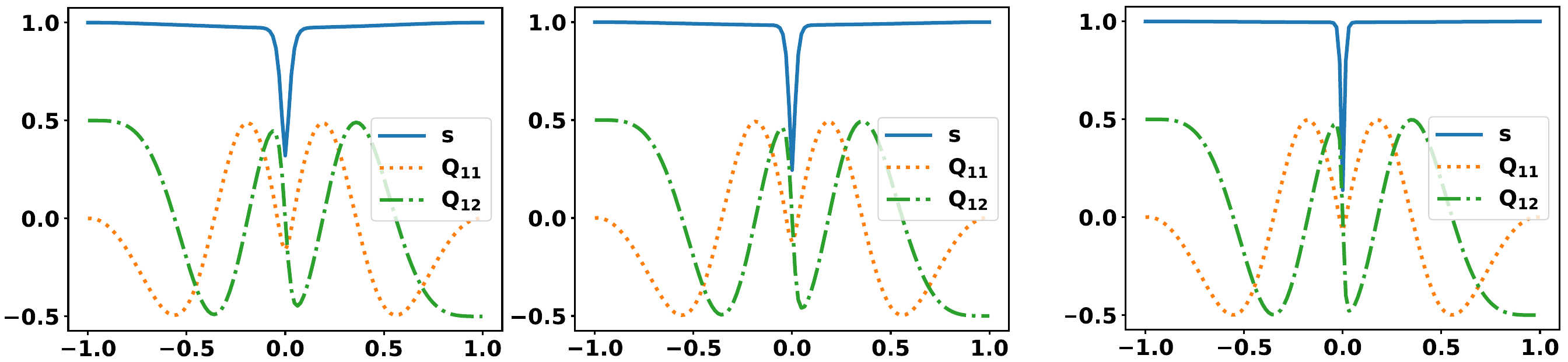}\\
    \end{minipage}
    \caption{Plot of an OR-type solution for $L^*=5e-4$, $3e-4$, $1e-4$ (from left to right). The remaining parameter values are $L_2=1e-3$, $p_x=-20$ and $\omega=-\frac{1}{4}$. The initial conditions used are \eqref{eq:s-comp} (with $s_{min}=0$) and \eqref{eq:theta-asymptotic} with $k=2$, along with the parameter values just stated. }
    \label{fig:effect-of-decreasing-L}
\end{figure}

\begin{figure}[ht]
    \centering
    \begin{minipage}{1.0\textwidth}
        \centering
        \includegraphics[width=1.0\textwidth]{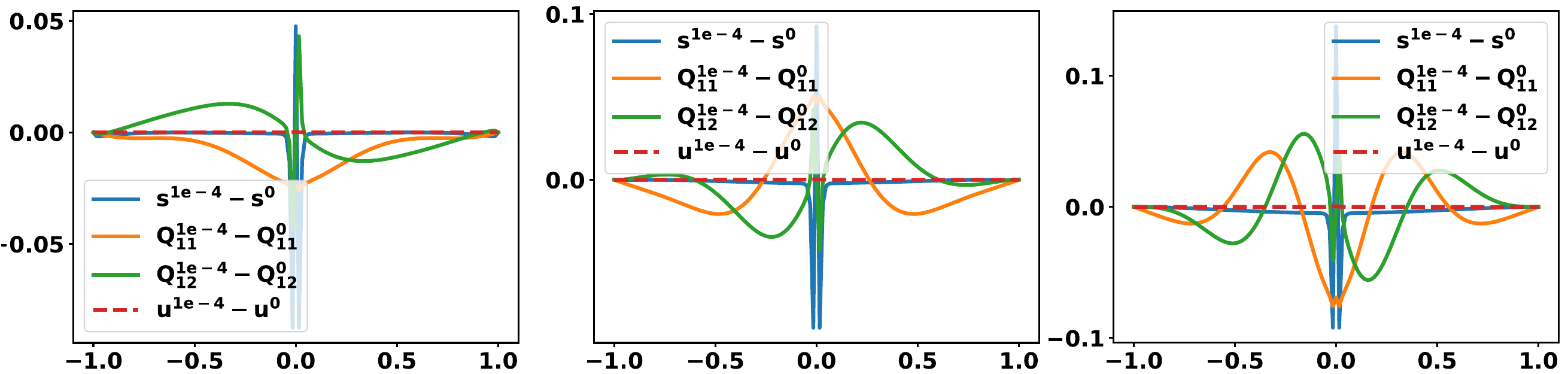}\\
    \end{minipage}
    \caption{Plot of $\Qvec^{1e-4}-\Qvec^0$, $s^{1e-4}-s^0$, and $u^{1e-4}-u^0$. Here, $\Qvec^{0}$ is the asymptotic profile given by \eqref{eq:s-comp} and \eqref{eq:theta-asymptotic} with, $s_{min}=0$, $k=1,2,3$ (from left to right), $L^*=1e-4$, $L_2=1e-3$, $p_x=-20$ and $\omega=-1/4$, whilst $\Qvec^{1e-4}$ denotes the corresponding numerical solution of \eqref{eq:Q-flow}. $s^0$ is given by \eqref{eq:s-comp} 
    and $s^{1e-4}$ is extracted from $\Qvec^{1e-4}$.
    The numerical solutions are found by using $\Qvec^0$ as the initial condition. Identical comments apply to $u^0-u^{1e-4}$, where $u^0$ is given by \eqref{eq:u-asymptotic} and $u^{1e-4}$ is the numerical solution of \eqref{eq:Q-flow}.}
    \label{fig:error-plot}
\end{figure}

\subsubsection{Active flows}

\begin{figure}[ht]
    \centering
    \begin{minipage}{1.0\textwidth}
        \centering
        \includegraphics[width=1.0\textwidth]{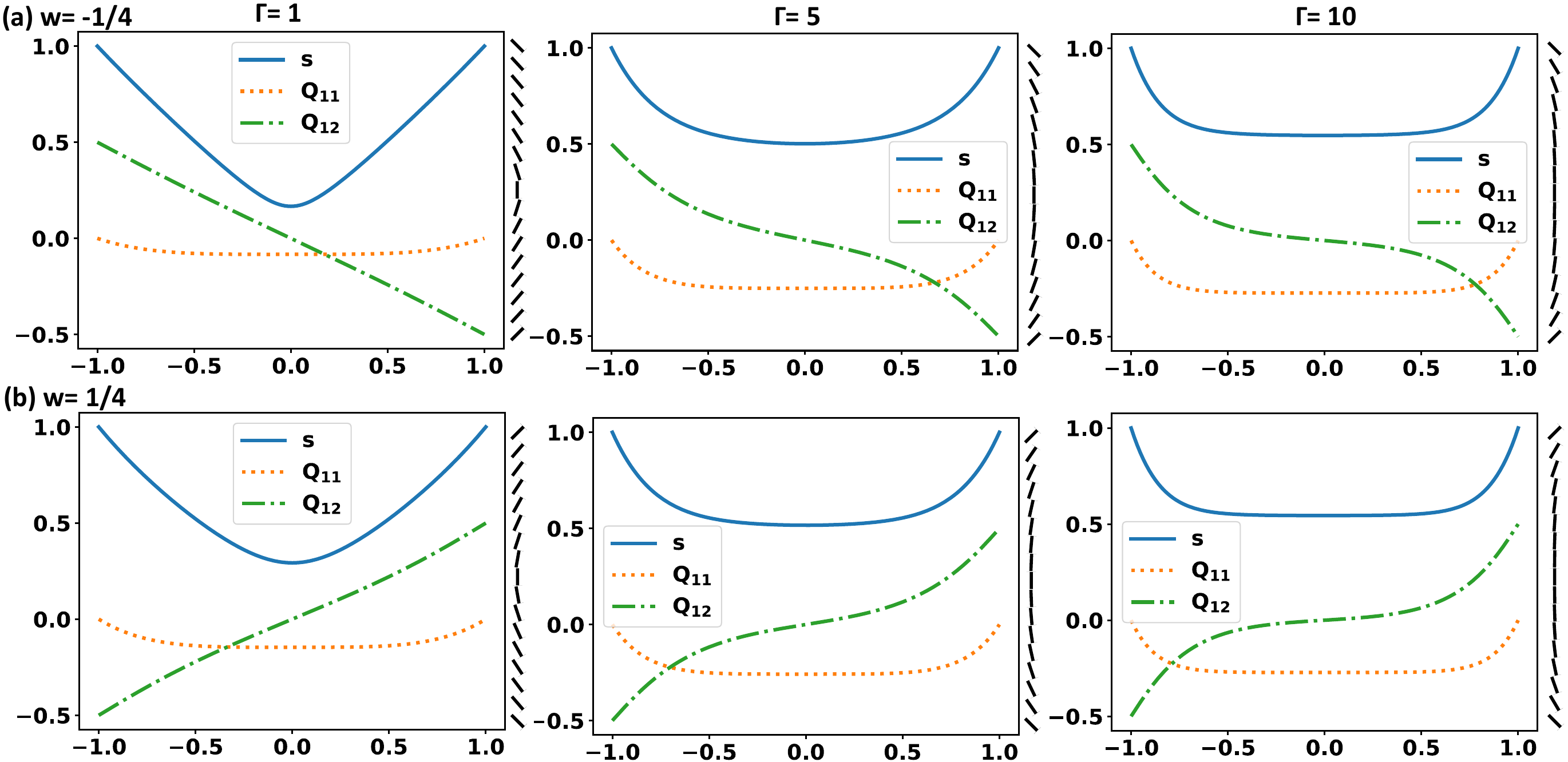}\\
    \end{minipage}
    \caption{The stable solutions of \eqref{eq:Q-active} for  $L^*=\infty$ 
    , $L_2=1e-3$, $c=\sqrt{2\pi}$ and $p_x=-1$.
    }
    \label{fig:L_infinty_active}
\end{figure}
As explained previously, we consider active flows with constant concentration $c$, and take $c>c^*$. To this end, we fix $c=\sqrt{2\pi}$ in the following numerical experiments. 
For $L^*$ large (small nano-scale channel domains), we find OR-type solutions when $\omega=\pm\frac{1}{4}$, and these are stable. 
In Fig. \ref{fig:L_infinty_active}, we plot these solutions when $p_x=-1$ and for three different values of $\Gamma$, which we recall is proportional to the activity parameter $\alpha_2$.
We only have $s(0)< 0.5 $ when $\Gamma=1$, in which case the director profile exhibits polydomain structures. 
As $\Gamma$ increases, $s(0)$ increases and $s \to 1$ almost everywhere, so that OR-type solutions are only possible for small values of $p_x$ and $\Gamma$. 
Increasing $|p_x|$ for a fixed value of $\Gamma$, also drives $s\to1$ everywhere. 
\begin{figure}[t]
    \centering
    \begin{minipage}{1.0\textwidth}
        \centering
        \includegraphics[width=1.0\textwidth]{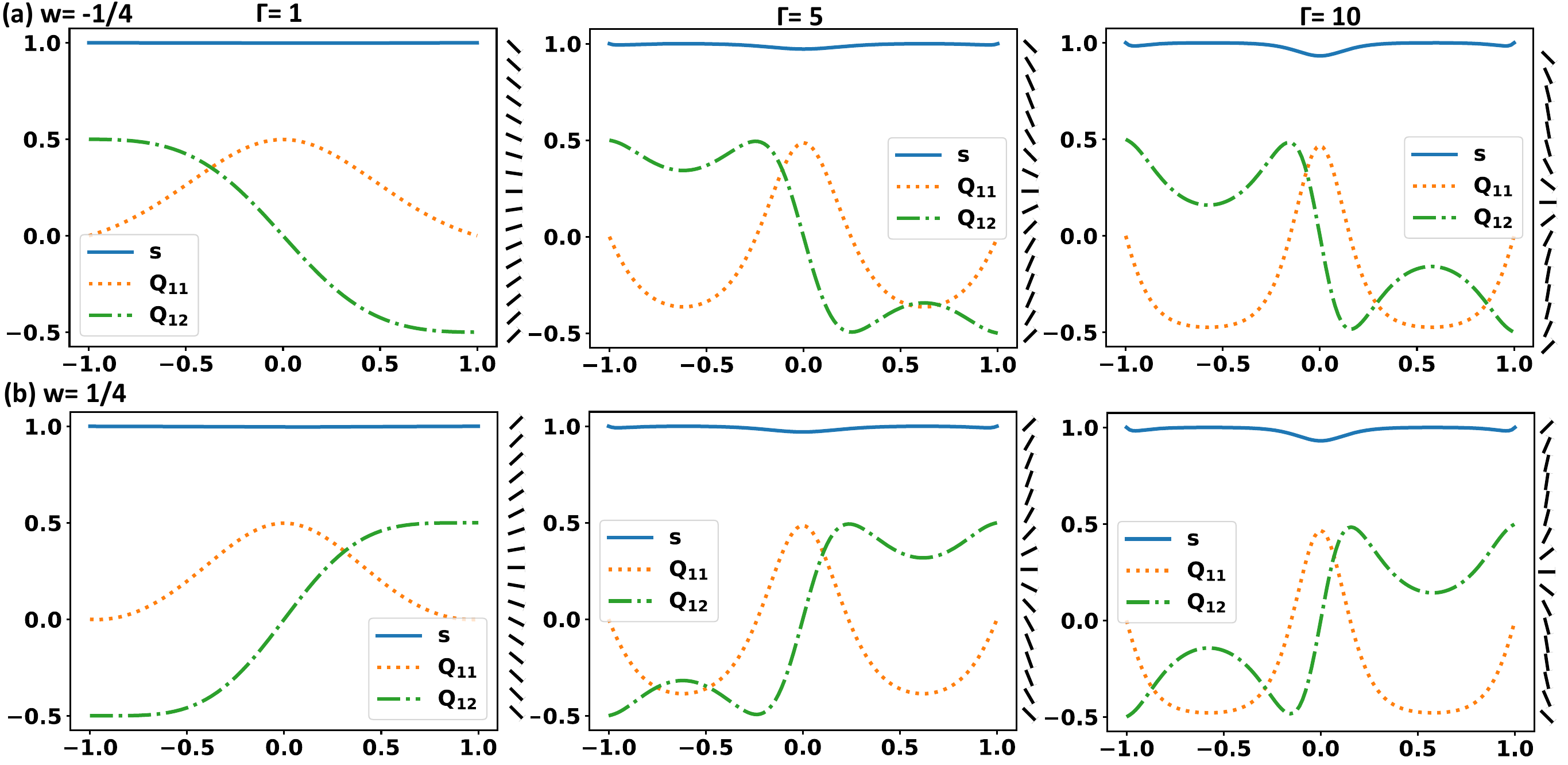}\\
    \end{minipage}
    \caption{The stable solutions of \eqref{eq:Q-active} for $L^*=1e-3$, $L_2=1e-3$, $c=\sqrt{2\pi}$ 
    and $p_x=-1$.}
    \label{fig:L_small_stable_solutions}
\end{figure}

As in the passive case, we also find unstable OR-type solutions consistent with the limiting asymptotic expression \eqref{eq:s-comp}, 
for small values of $L^*$ that correspond to micron-scale channels. The stable solutions have $s \approx 1$ almost everywhere (see Fig. \ref{fig:L_small_stable_solutions}). In Fig. \ref{fig:active-OR}, we find unstable OR-type solutions when $L^*=1e-3$, $L_2=1e-3$ and $\omega=-\frac{1}{4}$, for a range of values of $p_x$ and $\Gamma$. To numerically compute these solutions, we use the stated parameter values in \eqref{eq:s-comp} (with $s_{min}=0$) and \eqref{eq:theta-active-explicit}, along with $k=0$, as our initial condition. We only have $s(0)\approx 0$ 
provided $|p_x|$ and $\Gamma$ are not too large, however, $s(0) \to 0$ in the $L^*\to 0$ limit for fixed values of $p_x$ and $\Gamma$. This illustrates the robustness of OR-type solutions in an active setting. In Fig. \ref{fig:active-k-plot}, we plot three further distinct OR-type solutions, obtained by taking \eqref{eq:s-comp} (with $s_{min}=0$) and \eqref{eq:theta-active-explicit} with $k=1,2,3$, as our initial condition.
Hence, for the same reasons as in the passive case, we believe there may be multiple unstable OR-type solutions, corresponding to different values of $k$ in (\ref{eq:jump_k}).
\begin{figure}[ht]
    \centering
    \begin{minipage}{1.0\textwidth}
        \centering
        \includegraphics[width=1.0\textwidth]{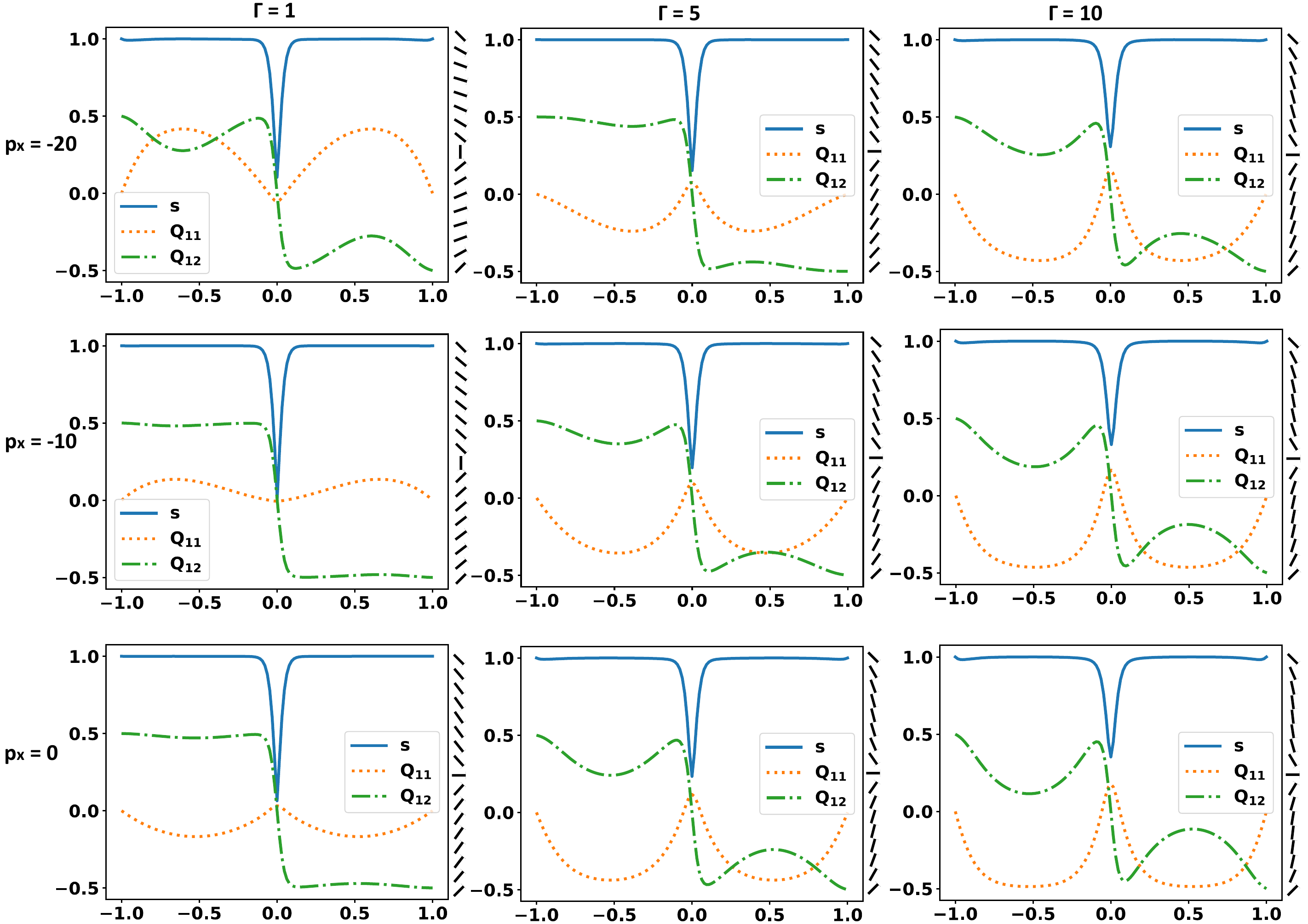}\\
    \end{minipage}
    \caption{Unstable OR-type solutions (in the sense that they have transition layer profiles for $s$) of \eqref{eq:Q-active}, for $L^*=1e-3$, $L_2=1e-3$, $c=\sqrt{2\pi}$ and $\omega=-\frac{1}{4}$. 
    The initial conditions used are \eqref{eq:s-comp} (with $s_{min}=0$) and \eqref{eq:theta-active-explicit} with $k=0$.}
    \label{fig:active-OR}
\end{figure}

\begin{figure}[ht]
    \centering
    \begin{minipage}{1.0\textwidth}
        \centering
        \includegraphics[width=1.0\textwidth]{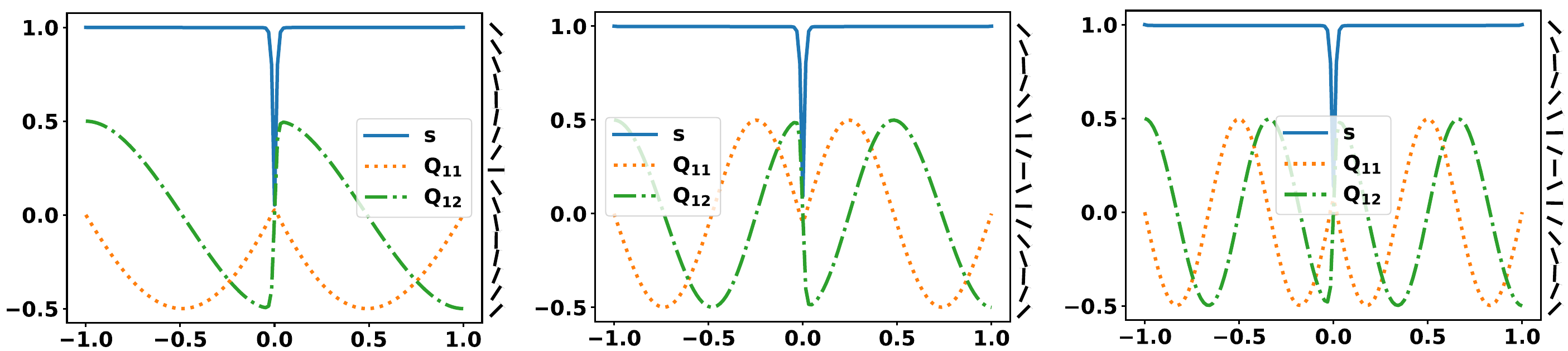}\\
    \end{minipage}
    \caption{Three unstable OR-type solutions of \eqref{eq:Q-active} for $L^*=1e-3$, $L_2=1e-3$, $p_x=-1$, $\Gamma=0.7$ and $\omega=-\frac{1}{4}$.} 
    \label{fig:active-k-plot}
\end{figure}

By analogy with the passive case, we now compare the asymptotic expressions \eqref{eq:s-comp}, \eqref{eq:u-active-asymptotic} and \eqref{eq:theta-active-explicit}, with the numerical solutions. 
The error plots are given in Fig. \ref{fig:active-error-plot}. Once again, there is good agreement between the limiting $s$-profile \eqref{eq:s-comp} and the numerical solutions, where any error is confined to a small interval around $y=0$. There is also good agreement between the asymptotic and numerically computed $\theta$-profiles (coded in terms of $Q_{11}$ and $Q_{12}$) and flow profile $u$, provided $|p_x|$, $\Gamma$, or both, are not too large. 
When $|p_x|$ and $\Gamma$ are large (say much greater than $1$), the accuracy of the asymptotics breaks down, especially for the $u$-profile. 
However, OR-type solutions are still possible for large values of $|p_x|$ and $\Gamma$, as elucidated by Fig. \ref{fig:active-OR}.   
\begin{figure}[ht]
    \centering
    \begin{minipage}{1.0\textwidth}
        \centering
        \includegraphics[width=1.0\textwidth]{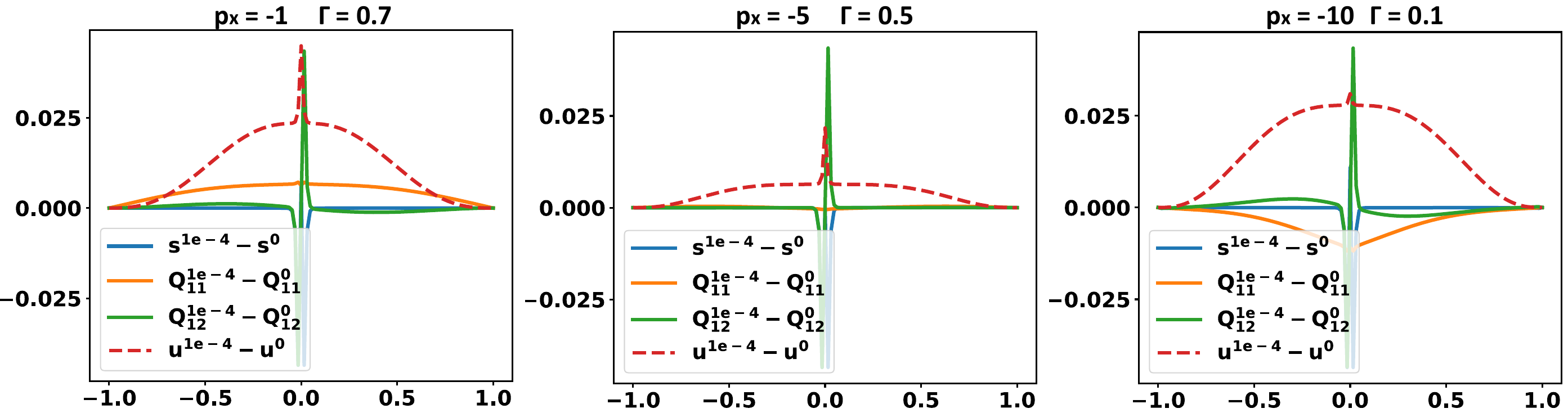}\\
    \end{minipage}
    \caption{Plot of $\Qvec^{1e-4}-\Qvec^0$, $s^{1e-4}-s^0$, and $u^{1e-4}-u^0$. Here, $\Qvec^{0}$ is given by \eqref{eq:s-comp} and \eqref{eq:theta-active-explicit} with, $s_{min}=0$, $k=0$, $c=\sqrt{2\pi}$, $L^*=1e-4$, $L_2=1e-3$, $p_x$ and $\Gamma$ as stated in the figure, and $\omega=-1/4$, whilst $\Qvec^{1e-4}$ is the numerical solution of \eqref{eq:Q-active}, with the same parameter values.} 
    \label{fig:active-error-plot}
\end{figure}

\section{Conclusions}\label{sec:conclusions}
In this article, we have demonstrated the universality of OR-type solutions in NLC-filled microfluidic channels. Section \ref{sec:constant-pressure-flow} focuses on the simple and idealised case of constant flow and pressure to give some preliminary insight into the more complex systems considered in section \ref{sec:passive+active}. 
We employ an $(s, \theta)$-formalism for the NLC state, and impose Dirichlet conditions for $(s, \theta)$ coded in terms of $\omega$, where $\omega$ is a measure of the director rotation between the bounding plates $y=\pm 1$. We always have a unique smooth solution in this framework, provided an OR solution does not exist 
(Theorem \ref{thm:uniqueness}). Additionally, in the $\Qvec$-framework, we prove OR solutions are compatible with $\omega = \pm \frac{1}{4}$ only (\Cref{thm:uniqueness+OR}), i.e., when the boundary conditions are orthogonal to each other. These OR solutions with polydomain structures exist for all values of $L^*$ or $\epsilon$, they are globally stable for large $L^*$ (small $\epsilon$), and there are multiple solutions for small values of $L^*$ (large $\eps$) or large channel geometries. In fact, for all three scenarios considered in this paper, we have found OR and OR-type solutions to be compatible with $\omega=\pm\frac{1}{4}$ only, or orthogonal boundary conditions.  
As has been noted in \cite{gartland} amongst others, orthogonal boundary conditions allow for solutions in the $\Qvec$-formalism (solutions of \eqref{eq:Q-EL-eqnts}) that have a constant set of eigenvectors in space. These 
solutions with a constant set of eigenvectors, are precisely the OR solutions which are disallowed for non-orthogonal boundary conditions. Thus, whilst the conclusion of Theorem~\ref{thm:uniqueness+OR} is not surprising, we now provide a proof of this fact. 

In section \ref{sec:passive+active}, we calculate useful asymptotic expansions for OR-type solutions in the limit of large domains, 
for both passive and active nematics. The asymptotics are validated by numerically-computed OR-type solutions for small and large values of $L^*$, using the asymptotic expansions as initial conditions. There is good agreement between the asymptotics and the numerical solutions, and the asymptotics give informative insight into the internal structure of domain walls of OR-type solutions and the outer far-field solutions. These techniques can be embellished to include external fields, other types of boundary conditions, and more complex geometries as well. 

In section \ref{sec:numerics}, the OR-type solutions are unstable for small $L^*$ or large channels. However, they may still be observable and hence, physically relevant. For example, in the experimental results in \cite{agha} for passive NLC-filled microfluidic channels, the authors find disclination lines at the centre of a microfluidic channel filled with the liquid crystal 5CB, with flow, both with and without an applied electric field. Moreover, the authors are able to stabilise these disinclination lines by applying an electric field. 
In the active case, there are similar experimental results in \cite{guillamat}. 
Here the authors apply a magnetic field to 8CB in the smectic-A phase placed on top of an aqueous gel of microtubules cross-linked by ATP-activated kinesin motor  clusters (constituting the active nematic system), and observe the formation of parallel lanes of defect cores in the active nematic, aligned perpendicularly to the magnetic field. 
These defect cores and disclination lines can be modelled by OR-type solutions, as studied in this paper, and we argue that whilst OR-type solutions are unstable for large domains, they can still influence non-equilibrium properties or perhaps be stabilised for tailor-made applications (also see \cite{han_nonlinearity}).

To conclude, we argue why OR-type solutions maybe universal in variational theories, with free energies that employ a Dirichlet elastic energy for the unknowns, e.g. $y_1 \ldots y_n$ for $n \in \mathbb{N}$. 
Working in one-dimensions, consider an energy of the form 
\begin{equation}
    \int_\Omega y^\prime_{1}(x)^2 
    +\ldots y^\prime_{n}(x)^2
    +\frac{1}{L^*} h(y_1,\ldots y_{n})(x)~\mathrm{d}x,
\end{equation}
subject to Dirichlet boundary conditions, for a material-dependent positive elastic constant $L^*$. The function, $h$, models a bulk energy that only depends on $y_1, \ldots, y_n$. As $L^*\to \infty$, the limiting Euler-Lagrange equations 
admit unique solutions of the form $y_j = ax + b$, for constants $a$ and $b$. 
For specific choices of $\Omega$ and asymmetric boundary conditions, we can have domain walls at $x=x^*$ such that $y_j(x^*) = 0$ for $j=1,\ldots, n$. Writing each $y_j = |y_j| sgn (y_j)$, the domain wall separates polydomains with phases differentiated by different values of $sgn(y_j)$. Moreover, we believe this argument can be extended to systems in two and three-dimensions.


\section*{Acknowledgments}
We thank Giacomo Canevari for helpful comments on some of the proofs in Section 3. 

\section*{Taxonomy}
The author names are listed alphabetically. JD led the project, which was conceived and designed by AM and LM. YH produced all the numerics and contributed to the analysis.
JD, AM and LM wrote the manuscript carefully and oversaw the project evolution. AM mentored JD and YH throughout the project.

\bibliographystyle{siamplain}
\bibliography{main}

\newpage
\setcounter{section}{0}

\section*{Supplementary materials. A Multi-Faceted Study of Nematic Order Reconstruction in Microfluidic Channels}

\section{Supplementary material for section \ref{sec:theory} - Theory} 
Here we give further details of the reduced modelling approach captured by \eqref{eq:Q}.

The reduced $\Qvec$-tensor \eqref{eq:Q}, is reasonable from a modelling perspective in certain
physical settings such as ours. Recall, we consider a thin channel so that we assume structural properties are invariant in the $z$-direction and we can consider a two-dimensional domain in the $xy$-plane. The reduction from a three-dimensional domain to a two-dimensional problem for thin film systems is reasonable on experimental grounds, but can also be justified rigorously. In \cite{golovaty2015} (also see \cite{wang_canevari_majumdar_2019} Theorem 2.1), the authors consider a three-dimensional thin film of nematic liquid crystal, on which they impose planar surface anchoring conditions on the top and bottom surfaces of the film, along with uniaxial $z$-invariant Dirichlet conditions on the lateral surfaces. In Theorem 5.1 of \cite{golovaty2015}, the authors use techniques from $\Gamma$-convergence to prove that when the height of the film is sufficiently small or in the thin film limit, it suffices to study the modelling problem (or the LdG energy minimization problem) on the planar cross-section with a two-dimensional domain. Our two-dimensional domain is $D = \left\{(x,y): -D \leq x \leq D; -L \leq y \leq L \right\}$, and $L << D$ by assumption. On these grounds, we further assume that the structural details are invariant in the $x$-direction and it suffices to work with a one-dimensional channel, $y \in [-L, L]$.

A further consequence of this result is the emergence of the reduced $\mathbf{Q}$-tensor in \eqref{eq:Q}. In \cite{golovaty2015}, the authors consider a full Landau-de Gennes (LdG) $\Qvec_f$-tensor, i.e., a $3\times 3$ symmetric traceless matrix
\begin{equation}\label{eq:full_Q}
            \Qvec_f=\begin{pmatrix}
            Q_{11} & Q_{12} & Q_{13}\\
            Q_{12} & Q_{22} & Q_{23}\\
            Q_{13} & Q_{23} & -Q_{11}-Q_{22}
        \end{pmatrix}.
\end{equation}
\noindent The authors impose a surface energy on the top and bottom of the film, which induces planar degenerate boundary conditions or enforces planar alignment of the corresponding nematic molecules on these surfaces. In other words, the minimizer, $\Qvec_s$, of the imposed surface energy on the top and bottom surfaces has the leading eigenvector (with the largest positive eigenvalue) in the $xy$-plane, with a fixed eigenvector in the $z$-direction and a constant eigenvalue associated with the fixed eigenvector in the $z$-direction (at least for a range of choices of the parameters in the surface energy, which comply with our modelling set-up).
In the thin film limit, the authors prove a $\Gamma$-convergence result and the minimizers of the $\Gamma$-limit of the LdG energy belong to the space of minimizers of the imposed surface energy on the top and bottom surfaces i.e. the candidate physically relevant configurations or minimizers of the LdG energy, labelled as $\Qvec_f$, have a fixed eigenvector in the $z$-direction with a fixed computable eigenvalue (in terms of the parameters in the surface energy). In our context, this means $\mathbf{e}_z$ (the unit-vector in the $z$-direction) is a fixed eigenvector for the physically relevant $\Qvec_f$. A simple calculation shows that $Q_{13}=Q_{23}=0$ and we relabel the components of \eqref{eq:full_Q} as follows,
\begin{align}
            \label{eq:Q_3d}
            \Qvec_f&=
            \begin{pmatrix}
            Q_{11}-q_3 & Q_{12} & 0\\
            Q_{12} & -Q_{11}-q_3 & 0\\
            0 & 0 & 2q_3
        \end{pmatrix}\nonumber\\
        &=
        \begin{pNiceMatrix}
        \Block{2-2}<\large>{\Qvec}  & & 0 \\
        & &  0 \\
        0 & 0 & 0
        \end{pNiceMatrix}
        + (2\mathbf{e}_z \otimes\mathbf{e}_z-\mathbf{e}_x \otimes\mathbf{e}_x-\mathbf{e}_y \otimes\mathbf{e}_y)q_3\nonumber\\
        &=Q_{11}(\mathbf{e}_x \otimes\mathbf{e}_x-\mathbf{e}_y \otimes\mathbf{e}_y) + Q_{12}(\mathbf{e}_x \otimes\mathbf{e}_y + \mathbf{e}_y \otimes\mathbf{e}_x)\nonumber\\
        &\quad\quad\quad\quad+ (2\mathbf{e}_z \otimes\mathbf{e}_z-\mathbf{e}_x \otimes\mathbf{e}_x-\mathbf{e}_y \otimes\mathbf{e}_y)q_3,\nonumber
\end{align}
where $\Qvec$ is given by \eqref{eq:Q}, and $q_3$ is a fixed known constant (at least for certain physically relevant situations (as captured by \cite{golovaty2015} Theorem 3.1 case (iv)) and/or system temperatures (\cite{canevari})).
This leaves us with two degrees freedom and this is precisely the information captured by the reduced $\Qvec$-tensor \eqref{eq:Q}. Hence, \eqref{eq:Q} describes the nematic ordering in the $xy$-plane.

Finally, we note that there is no notion of uniaxiality or biaxiality associated to the reduced $\Qvec$-tensor \eqref{eq:Q}. When mapped to the full $\Qvec_f$-tensor as described above, we can address these questions via the biaxiality parameter $\beta:=1-(\textrm{tr}\Qvec_f^3)^2/(\textrm{tr}\Qvec_f^2)^3$. As in previous works on OR (e.g. the seminal work \cite{sluckin}), when placed in the context of the full $\Qvec_f$-tensor, our OR solutions in the main text (\Cref{sec:constant-pressure-flow}) reconcile our conflicting uniaxial boundary conditions via the introduction of biaxiality. We now demonstrate this with an example (also see \cite{han2020siap}).

Consider a full $\Qvec_f$-tensor subject to the following uniaxial Dirichlet boundary conditions:
\begin{equation}\label{eq:3D_bcs}
    \Qvec_\textbf{b}(\mathbf{x})=\begin{cases}
        s_+\left(\nvec_1\otimes\nvec_1-\frac{1}{3}\mathbf{I}\right) \quad\textrm{on $y=-1$}\\
        s_+\left(\nvec_2\otimes\nvec_2-\frac{1}{3}\mathbf{I}\right) \quad\textrm{on $y=1$},
    \end{cases}
\end{equation}
where 
\begin{equation}
    \nvec_1=\frac{1}{\sqrt{2}}(1,-1,0) \textrm{ and } \nvec_2=\frac{1}{\sqrt{2}}(1,1,0),
\end{equation}
i.e., the boundary conditions \eqref{eq:Q-bcs} with $\omega=\frac{1}{4}$ and the third dimension  included. Here,
\begin{equation}
    s_+=\frac{B+\sqrt{B^2-24AC}}{4C},
\end{equation}
is the value of the scalar order parameter $s$, such that bulk potential
\begin{equation}
    f_b(\Qvec)=\frac{A}{2}\textrm{tr}(\Qvec^2) -\frac{B}{3}\textrm{tr}(\Qvec^3) +\frac{C}{4}(\textrm{tr}\Qvec^2)^2,
\end{equation}
is minimised in the set $\mathcal{N}=\{\Qvec\in S_3:\Qvec=s_+(\nvec\otimes\nvec-\mathbf{I}/3)\}$ \cite{majumdar-2010-article} (note,  $S_3\coloneqq\{\Qvec\in\mathbb{M}^{3\times 3}: Q_{ij}=Q_{ji},Q_{ii}=0\}$). As in the main text, $A<0$ and $C>0$ are temperature and material dependent constants respectively, while $B>0$ is another material dependent constant.
We consider the special temperature $A=-B^2/3C$ and $q_3=-B/6C$ \cite{canevari}.
Letting $Q_{11}$ and $Q_{12}$ denote an OR solution of \eqref{eq:Q11eqtn}-\eqref{eq:Q12eqtn}, subject to \eqref{eq:Q-bcs} with $\omega=\frac{1}{4}$,
\begin{equation}\label{eq:mapping}
    \Qvec_f=
            \begin{pmatrix}
            s_+Q_{11}-q_3 & s_+Q_{12} & 0\\
            s_+Q_{12} & -s_+Q_{11}-q_3 & 0\\
            0 & 0 & 2q_3
        \end{pmatrix},
\end{equation}
satisfies the boundary conditions \eqref{eq:3D_bcs}. Moreover, \eqref{eq:mapping} satisfies the corresponding fully 3D system of Euler-Lagrange equations at this special temperature. We plot the biaxiality parameter, eigenvalues and director (i.e., the eigenvector of $\Qvec_f$ with a non-degenerate eigenvalue if $\Qvec_f$ is uniaxial, and the eigenvector of $\Qvec_f$ with the largest positive eigenvalue if $\Qvec_f$ is biaxial), of \eqref{eq:mapping} in \Cref{fig:eigenvalues}. We have two regions of biaxiality (including two points of maximal biaxiality) encompassing a point of uniaxiality at the channel centre, which corresponds to a domain wall. Moreover, the director escapes into the third dimension with a negative corresponding eigenvalue at the channel centre making the transition across this domain wall continuous. 


\begin{figure}[ht]
    \centering
    \begin{minipage}{0.6\textwidth}
        \centering
        \includegraphics[width=1.0\textwidth]{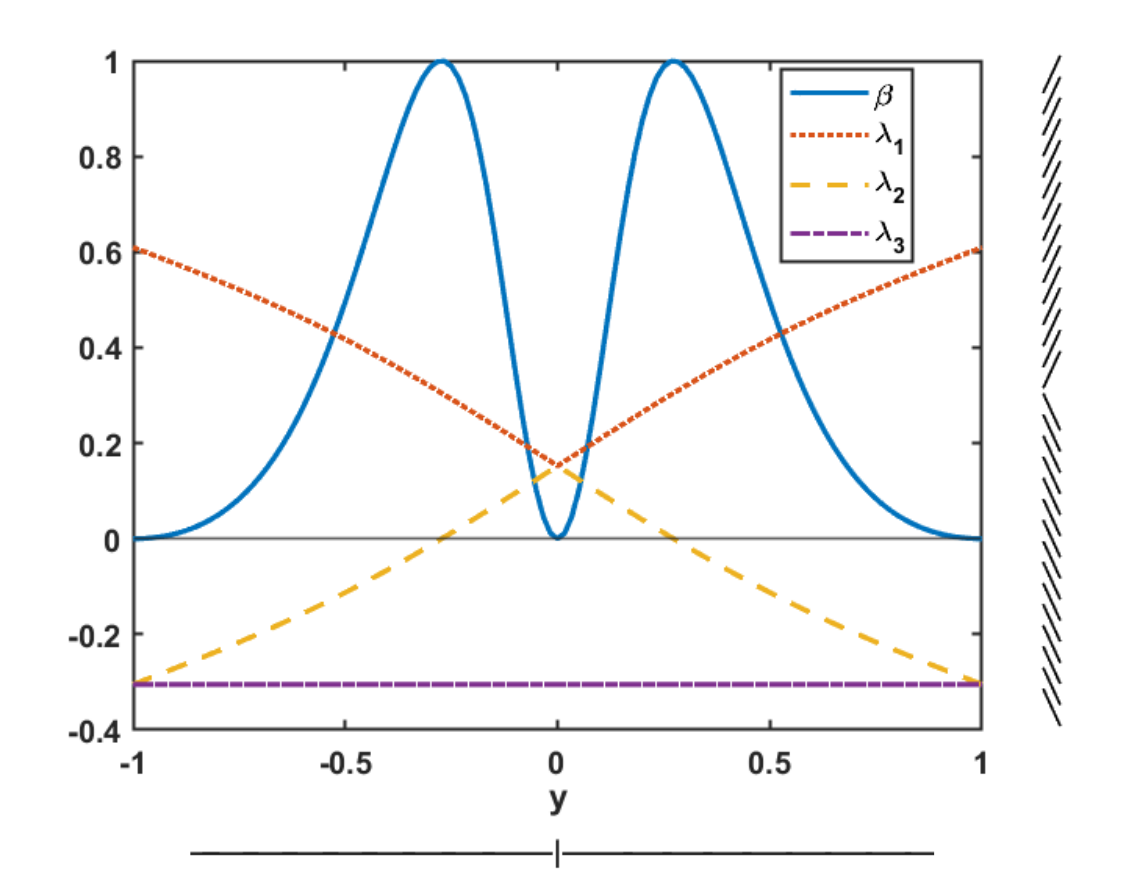}
    \end{minipage}
    \caption{The eigenvalues and biaxiality parameter $\beta$, of an OR solution of \eqref{eq:Q-EL-eqnts}, subject to the boundary conditions \eqref{eq:Q-bcs}, with $\omega=\frac{1}{4}$ and $L^*=\frac{1}{2}$. We map this OR solution of \eqref{eq:Q-EL-eqnts} to a full $\Qvec$-tensor via \eqref{eq:mapping} (above). On the right we plot the director in the $xy$-plane and below in the $yz$-plane.} 
    \label{fig:eigenvalues}
\end{figure}

\section{Supplementary material for section \ref{sec:constant-pressure-flow} - Passive flows with constant velocity and pressure}

Here we present supplementary material for section \ref{sec:constant-pressure-flow} of the main text. 

\begin{theorem}
(Maximum Principle) Let $s$ and $\theta$ be  
solutions of \eqref{eq:s-no-flow} and \eqref{eq:theta-no-flow}, where $s$ is at least $C^2$ and $\theta$ is at least $C^1$, then
\begin{equation}
     0< s\leq 1\quad\forall y\in[-1,1]. 
\end{equation}
\end{theorem}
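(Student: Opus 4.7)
The plan splits naturally into the upper bound $s \leq 1$ and the lower bound $s > 0$, which require different techniques.

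For the upper bound I will use a classical weak maximum principle. The idea is to test \eqref{eq:s-no-flow} against the admissible function $\varphi := (s-1)_{+}$, which vanishes on the boundary thanks to $s(\pm 1) = 1$. After integration by parts the left-hand side becomes $-\int_{\{s>1\}}(s^\prime)^2 \, dy \leq 0$, while on $\{s>1\}$ every factor of the right-hand side integrand $[4 s (\theta^\prime)^2 + L^{*-1} s(s^2 - 1)]\varphi$ is manifestly non-negative. Both sides must therefore vanish, forcing $s^\prime \equiv 0$ on $\{s>1\}$; continuity together with $s(\pm 1) = 1$ then collapses this set to the empty set, giving $s \leq 1$ throughout.

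For the lower bound I will exploit the first integral $s^2 \theta^\prime \equiv B$ from \eqref{eq:theta-no-flow} and case-split on $B$. When $B \neq 0$, the substitution $\theta^\prime = B/s^2$ reduces \eqref{eq:s-no-flow} to the autonomous ODE $s^{\prime\prime} = 4 B^2 / s^3 + L^{*-1} s(s^2 - 1)$ on $\{s>0\}$. Multiplying by $s^\prime$ and integrating produces a conserved quantity of the form
\begin{equation*}
 E = \tfrac{1}{2}(s^\prime)^2 + \tfrac{2 B^2}{s^2} - \tfrac{1}{L^*}\Bigl(\tfrac{s^4}{4} - \tfrac{s^2}{2}\Bigr),
\end{equation*}
which is finite since $s(\pm 1) = 1$ and $s$ is $C^1$. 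Because $2 B^2 / s^2 \to +\infty$ as $s \to 0^+$, any point at which $s$ vanishes would force $E = +\infty$, contradicting constancy. Hence $s$ is bounded away from $0$ on $[-1,1]$, and continuity together with the boundary values yields $s > 0$. When $B = 0$, equation \eqref{eq:s-no-flow} reduces to the smooth autonomous ODE $s^{\prime\prime} = L^{*-1} s(s^2 - 1)$. Picking the first interior zero $y_0$ of $s$ encountered from either endpoint makes $y_0$ a local minimum with $s \geq 0$ nearby, so $s^\prime(y_0) = 0$; Cauchy uniqueness applied to the data $s(y_0) = s^\prime(y_0) = 0$ then forces $s \equiv 0$, contradicting $s(\pm 1) = 1$.

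The main obstacle I expect is the $B \neq 0$ case: the singular $1/s^3$ term is precisely what prevents $s$ from vanishing, yet a direct pointwise maximum-principle argument cannot see this mechanism because $s = 0$ is a zero (not an extremum) of the right-hand side of \eqref{eq:s-no-flow}. The conserved-energy blow-up is the cleanest way I can see to convert the singularity into a rigorous exclusion of $s = 0$. The $B = 0$ case is much softer and relies only on smoothness of the reduced ODE together with uniqueness of its Cauchy problem.
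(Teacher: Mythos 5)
Your upper bound and your $B\neq 0$ case are both correct, and each takes a genuinely different route from the paper: the paper obtains $s\leq 1$ by a pointwise argument, evaluating $(s^2)^{\prime\prime}$ at an interior maximum of $s^2$ and reading off a sign contradiction from \eqref{eq:s-no-flow}, rather than testing against $(s-1)_{+}$; and for the lower bound it observes that any non-positive local minimum of $s$ forces $B=0$ and then works entirely within the reduced ODE $s^{\prime\prime}=\epsilon s(s^2-1)$, whereas your conserved-energy blow-up $2B^2/s^2\to\infty$ excludes zeros of $s$ directly whenever $B\neq 0$. Both of your arguments there are sound, and the energy blow-up is arguably the more transparent way to see why the singular term protects $s$ from vanishing.

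The gap is in the $B=0$ case. You assert that the first interior zero $y_0$ of $s$ is a local minimum ``with $s\geq 0$ nearby,'' hence $s^{\prime}(y_0)=0$, and then invoke Cauchy uniqueness at the double zero. But $s\geq 0$ is part of what is being proved: a priori a $C^1$ solution of $s^{\prime\prime}=\epsilon s(s^2-1)$ could cross zero transversally with $s^{\prime}(y_0)\neq 0$ and become negative, in which case $y_0$ is not a critical point and the data $(s,s^{\prime})(y_0)=(0,0)$ is unavailable. Worse, the first integral $(s^{\prime})^2=\epsilon\left(\tfrac{s^4}{2}-s^2\right)+A$ together with $s(\pm 1)=1$ gives $(s^{\prime}(y_0))^2=A=(s^{\prime}(1))^2+\tfrac{\epsilon}{2}>0$ at any zero when $\epsilon>0$, so a zero of a $C^1$ solution is necessarily transversal---the tangential touch your argument handles is precisely the configuration that cannot occur (it is the OR profile $s=2|Q_{12}|$, which has a corner and is excluded by the $C^1$ hypothesis, not by Cauchy uniqueness). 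To close the case you must use the boundary data, as the paper does: $s(\pm1)=1$ forces $A\geq\tfrac{\epsilon}{2}$, while the existence of an interior critical point at the global minimum of $s$ (which lies in $(-1,1)$ and has value $\leq 0$ under the contradiction hypothesis) forces $A\leq\tfrac{\epsilon}{2}$; hence $A=\tfrac{\epsilon}{2}$, $(s^{\prime})^2=\tfrac{\epsilon}{2}(s^2-1)^2$, and Picard--Lindel\"of applied to this first-order equation with $s(-1)=1$ yields $s\equiv 1$, contradicting $s\leq 0$ anywhere. (The subcase $\epsilon=0$ is immediate since $s$ is then affine.) With that repair your proof is complete.
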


\begin{proof} 
Let $(s,\theta)$ denote a solution pair of \eqref{eq:s-no-flow} and \eqref{eq:theta-no-flow}, and assume for contradiction that $s$ has a local minimum at $\hat{y}$, such that $s(\hat{y})\leq0$. This 
implies that $B=0$ using \eqref{eq:theta-no-flow}.
If $B=0$, then we must have $s=0$ or constant $\theta$, at every point in $\Omega$. 
This solution is determined by the ordinary differential equation: 
\begin{equation}
    \label{eq:sorder}
s^{\prime\prime}=\eps s(s^2-1),
\end{equation}
which can  be integrated to obtain the scalar order parameter. Doing this, we find
\begin{equation}
    s^\prime=\pm\sqrt{\left(\epsilon\left(\frac{s^4}{2}-s^2\right)+A\right)}.\label{eq:s-OR-eqtn}
\end{equation}
Evaluating at $s=1$, we see $A\geq\frac{\eps}{2}$. At the minimum, $s^\prime(\hat{y})=0$, hence
\begin{equation}
    s^2(\hat{y})=1\pm\sqrt{1-\frac{2A}{\epsilon}},
\end{equation}
which requires $A\leq\frac{\eps}{2}$. Combining these inequalities yields $A=\frac{\eps}{2}$. We then have
\begin{equation*}
    s^\prime
    =\pm\sqrt{\frac{\epsilon}{2}(s^2-1)^2}.
\end{equation*}
Fixing the sign in the above to be either positive or negative, we have a first order ODE subject to the boundary condition $s(-1)=1$, or $s(1)=1$. In any case, $s\equiv1$ is a solution, hence, by the Picard-Lindel\"of Theorem, this is the unique solution and this is clearly positive everywhere. 



We prove that $s\leq 1$ by a direct application of the maximum principle. Assume that there exists a point $y^*\in[-1,1]$ where $s$ attains its maximum, and $s(y^*)>1$ so that $y^*\in(-1,1)$. 
The function $s^2$ must also attain its maximum at the point $y^*\in(-1,1)$, 
so that
\begin{equation*}
    \left(s^2\right)^{\prime\prime}(y^*)\leq 0.
\end{equation*}
Next, note that $\left(s^2\right)^{\prime\prime}=2(s^\prime)^2+2ss^{\prime\prime}$. We now multiply \eqref{eq:s-no-flow} by $s$, and substitute for $s^{\prime\prime}s$ in the resulting expression to obtain
\begin{equation}
    \frac{1}{2}\left(s^2\right)^{\prime\prime}=(s^\prime)^2+4s^2(\theta^\prime)^2+\epsilon s^2(s^2-1). \label{eq:prop1}
\end{equation}
Using $s(y^*)>1$, \eqref{eq:prop1} implies that $\left(s^2\right)^{\prime\prime}(y^*)>0$, which is a contradiction. Hence, we conclude that
$
    s\leq 1\;\forall y\in[-1,1].
$
\end{proof}

\subsection{The \texorpdfstring{$\eps\to0$}{Lg} and \texorpdfstring{$\eps\to\infty$}{Lg} limits}

In the $\epsilon \to 0 $ limit, relevant for nano-scale channels (also see \cite{han2020siap})
, the limiting problem can be solved explicitly. 
Recall the system \eqref{eq:Q-EL-eqnts}. 
From the maximum principle, $||\Qvec||_{L^\infty}$ is bounded independently of $\epsilon$, and the system \eqref{eq:Q-EL-eqnts} reduces to the Laplace equations in the $\epsilon\to 0$ limit \cite{fang2019}:
\begin{equation*}
    Q^{\prime\prime}_{11}=0, \quad Q^{\prime\prime}_{12}=0.
\end{equation*}
This limiting system, subject to the boundary conditions (\ref{eq:Q-bcs}), admits the unique solution
\begin{equation}
    Q_{11}(y)=\frac{1}{2}\cos(2\omega\pi),\;Q_{12}(y)=\frac{y}{2}\sin(2\omega\pi). \label{eq:Q-lim}
\end{equation}
Substituting \eqref{eq:Q-lim} into \eqref{eq:s-theta-relationship} (the relationship between the $\Qvec$-components and $s, \theta$ - refer to the main manuscript), we obtain the following limiting profiles for $s$ and $\theta$, in the $\epsilon \to 0$ limit:
\begin{subequations}\label{eq:s-theta-large-domains}
\begin{align}
    & s_{0, \omega}=\sqrt{\cos^2(2\omega\pi)+y^2\sin^2(2\omega\pi)}, \label{eq:s-Q-solution}\\
    & \theta_{0, \omega}=\frac{1}{2}
    \textrm{atan2}(y\sin(2\omega\pi),\cos(2\omega\pi)).
    \label{eq:theta-Q-solution}
\end{align}
\end{subequations}
Using the explicit expressions above, one can easily verify that $s_{0,\omega}$ has exactly one global minimum at $y=0$. Further, $s_{0, \pm \frac{1}{4}}\left(0\right)=0$ and $s_{0, \omega}(0)>0$ for $\omega\neq\pm\frac{1}{4}$.

\begin{proposition}
For $\omega\neq\pm\frac{1}{4}$, $s_{0,\omega}$ has exactly one critical point at $y=0$, which is a non-trivial global minimum i.e. $s_{0, \omega}(0)>0$. For $\omega=\pm\frac{1}{4}$, $s$ has exactly one minimum at $y=0$, such that $s_{0, \pm \frac{1}{4}}\left(0\right)=0$. 
\end{proposition}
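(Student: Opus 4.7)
The entire content of the proposition follows by direct inspection of the explicit formula
\[
s_{0,\omega}(y) = \sqrt{\cos^2(2\omega\pi) + y^2 \sin^2(2\omega\pi)}
\]
given in \eqref{eq:s-Q-solution}. The natural strategy is to study the simpler quantity $s_{0,\omega}^2(y) = \cos^2(2\omega\pi) + y^2 \sin^2(2\omega\pi)$, which is a non-negative quadratic polynomial in $y$ with leading coefficient $\sin^2(2\omega\pi) \geq 0$ and constant term $\cos^2(2\omega\pi) \geq 0$. Differentiating, $(s_{0,\omega}^2)'(y) = 2y \sin^2(2\omega\pi)$, so wherever $s_{0,\omega}(y) > 0$ the critical points of $s_{0,\omega}$ coincide with those of $s_{0,\omega}^2$.

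I would then split the argument according to the value of $\omega \in [-\tfrac12,\tfrac12]$. In the generic range $\omega \notin \{0, \pm\tfrac{1}{4}, \pm\tfrac{1}{2}\}$, both $\cos(2\omega\pi)$ and $\sin(2\omega\pi)$ are nonzero, so $s_{0,\omega}(y) \geq |\cos(2\omega\pi)| > 0$ for all $y$, and the derivative vanishes if and only if $y=0$. Since $s_{0,\omega}^2$ is an upward-opening parabola, this lone critical point is necessarily the global minimum, with value $s_{0,\omega}(0) = |\cos(2\omega\pi)| > 0$, establishing the claim. For the degenerate endpoints $\omega \in \{0, \pm\tfrac{1}{2}\}$ one has $\sin(2\omega\pi) = 0$, so $s_{0,\omega} \equiv 1$ and the conclusion $s_{0,\omega}(0) > 0$ holds trivially (with the understanding that every point is a degenerate global minimum).

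Finally, for $\omega = \pm\tfrac{1}{4}$, we have $\cos(2\omega\pi) = 0$ and $\sin(2\omega\pi) = \pm 1$, so the formula collapses to $s_{0,\pm 1/4}(y) = |y|$. This is differentiable away from $y=0$, with nowhere-vanishing derivative on $(-1,0) \cup (0,1)$, and has its unique global minimum at $y=0$ with $s_{0,\pm 1/4}(0) = 0$, as asserted.

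There is no substantive obstacle: the proposition reduces to elementary algebra on an explicit closed-form expression. The only minor subtlety worth flagging in the write-up is the genuinely constant case $\omega \in \{0, \pm\tfrac{1}{2}\}$, which must be handled (or excluded) separately because the phrase ``exactly one critical point'' is then vacuous; however the positivity statement $s_{0,\omega}(0) > 0$ still holds, so the proposition remains correct under the natural interpretation.
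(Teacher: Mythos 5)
Your proposal is correct and follows essentially the same route as the paper: direct case analysis on $\omega$ using the explicit formula \eqref{eq:s-Q-solution}, differentiating to locate the unique critical point at $y=0$ and reading off $s_{0,\omega}(0)=|\cos(2\omega\pi)|$. Working with $s_{0,\omega}^2$ rather than $s_{0,\omega}$ itself is a cosmetic difference, and your explicit handling of the constant cases $\omega\in\{0,\pm\tfrac12\}$ matches the paper's brief remark that $s\equiv 1$ there.
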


\begin{proof} It is clear from \eqref{eq:s-Q-solution} that $s_{0, \omega}(-y)=s_{0, \omega}(y)$ and as such $s_{0, \omega}$ is symmetric.  
We quickly note from \eqref{eq:s-Q-solution}, that $s=1$ when $\omega=0,\pm \frac{1}{2}$. 
Next, we consider the cases $\omega\neq 0,\;\pm\frac{1}{4}$, $\pm\frac{1}{2}$. 
Differentiating \eqref{eq:s-Q-solution}, we have
\begin{align*}
    s_{0, \omega}^\prime(y)&=\pm\frac{y\sin^2(2\omega\pi)}{\sqrt{\cos^2(2\omega\pi)+y^2\sin^2(2\omega\pi)}}=0
    \implies y=0\textrm{ since $\omega\neq0,\pm\frac{1}{2}$.}
\end{align*}
Hence, the solution has one critical point at $y=0$, which is a global minimum. Since $s(0) = \cos(2\omega\pi)$, this minimum is non-trivial for $\omega\neq\pm \frac{1}{4}$.

Next, we briefly consider the case when $\omega=\pm\frac{1}{4}$. From \eqref{eq:s-Q-solution} we see that the solution is given by
\begin{equation*}
s_{0, \pm \frac{1}{4}}(y)=
\begin{cases}
    -y\quad\textrm{for $y\in\left[-1,0\right]$}\\
    y\quad\textrm{for $y\in\left[0,1\right]$},
    \end{cases}
\end{equation*}
which clearly has a unique minimum value $y=0$, and $s_{0, \pm \frac{1}{4}}(0) = 0.$ We have a domain wall at $s=0$, and $\theta_{0,  \frac{1}{4}}= -\frac{\pi}{4}$ for $y<0$ and $\theta_{0, \frac{1}{4}}=\frac{\pi}{4}$ for $y>0$. Analogous remarks apply to $\omega = -\frac{1}{4}$. In other words, there are polydomain structures with distinct nematic directors, separated by a domain wall i.e. the unique limiting profile is an OR solution, and hence globally stable in the $\epsilon \to 0$ limit, for $\omega=\pm \frac{1}{4}$.
\end{proof}

In the $\epsilon \to \infty$ limit (relevant for micron-scale channels), the system \eqref{eq:EL-eqtns} (refer to main manuscript) reduces to (see \cite{braides} for rigorous arguments)
\begin{equation}
    s(s^2-1)=0,\quad s^2\theta_y=B,
\end{equation}
which, subject to the boundary conditions (\ref{eq:theta-bc}), has the solution
\begin{equation}
    s(y)=1, \quad \theta(y)=\omega\pi y \quad \textrm{for all $\omega$, including $\omega = \pm \frac{1}{4}$.}
\end{equation}
OR solutions are unstable in the $\epsilon \to\infty$ limit, for $\omega = \pm \frac{1}{4}$ \cite{lamy}. However, we can deduce the asymptotic profiles of OR solutions in this limit, since OR solutions exist for all $\epsilon$, when $\omega = \pm \frac{1}{4}$. To this end, we introduce the OR energy for $\omega = \pm \frac{1}{4}$ and $Q_{11}(y) \equiv 0 $ for all $y\in [-1, 1]$:
\begin{equation}
    E(Q_{12}):=\int_{-1}^1 \left(Q_{12}^\prime\right)^2+\eps Q_{12}^2(2Q^2_{12}-1),~\mathrm{d}y,\label{eq:OR_energy}
\end{equation}
subject to the boundary conditions in (\ref{eq:Q-bcs}).
As $\epsilon\to \infty$, the minimizers of the OR energy converge to the set $\mathcal{B}^{OR}$ where
\[
    \mathcal{B}^{OR} = \left\{ (Q_{11}, Q_{12}) = \left(0,\pm\frac{1}{2} \right)
    \right\}.
\]

Focusing on $\omega = \frac{1}{4}$ and replicating arguments from \cite{dalby-farrell-majumdar-xia}, we deduce that OR solutions, interpreted as minimizers of \eqref{eq:OR_energy}, converge in $ L^1\left([-1,1]\right)$, almost everywhere to a map of the form
\begin{equation}
    \Qvec^* = \left(0,-\frac{1}{2}\right)\chi_{E_1}+\left(0,\frac{1}{2}\right)\chi_{E_2},\label{eq:Q_0} 
\end{equation}
where $\chi$ is the characteristic function of an interval, $E_1=[-1,0)$ and $E_2=(0,1]$, in the $\epsilon \to \infty$ limit.

\section{Supplementary material for section \ref{sec:numerics} - Numerical methods}
Here we explain the numerical methods used in section \ref{sec:numerics} of the main text.
We write the dynamical systems in their weak formulation. For example, the weak formulation of the complicated active system \eqref{eq:Q-active}, is the following: 
\begin{subequations}
\begin{align}
        & \int_{-1}^1\frac{\partial Q_{11}}{\partial t}v_1dy=\int_{-1}^1 u_y Q_{12}v_1- Q_{11,y}v_{1y}+\frac{1}{L^*}Q_{11}(1-4(Q_{11}^2+Q_{12}^2))v_1~\mathrm{d}y,
        \label{a}\\
        & \int_{-1}^1\frac{\partial Q_{12}}{\partial t}v_2dy=\int_{-1}^1 -u_y Q_{11}v_2- Q_{12,y}v_{2y}+\frac{1}{L^*}Q_{12}(1-4(Q_{11}^2+Q_{12}^2))v_2~\mathrm{d}y,
        \\
        & \int_{-1}^1\frac{\partial u}{\partial t}v_3dy=\int_{-1}^1
        -p_xv_3-\left(u_{y}+2L_2(Q_{11}Q_{12,yy}-Q_{12}Q_{11,yy})+\Gamma(Q_{12}c^2)\right)v_{3y}~\mathrm{d}y,\label{c}
\end{align}
\end{subequations}
for all $v_1,v_2,v_3\in W^{1,2}_0([-1,1])$ with Dirichlet boundary conditions for $(Q_{11},Q_{12})$ and $u$, given in \eqref{eq:Q-bcs} and \eqref{eq:poiseuille}, respectively. 
We partition the domain $[-1,1]$ into a uniform mesh with mesh size $h = 1/256$. We solve the dynamical systems \eqref{a}-\eqref{c} with finite element methods and Newton solver. Due to the third order partial derivatives with respect to $y$ in (\ref{eq:Q-active}), Lagrange elements of order $2$ are used for the spatial discretization. 

We also study the linear stability of the equilibrium solutions in \eqref{eq:Q-flow} and \eqref{eq:Q-active}. The systems can be written as $\frac{\partial \mathbf{x}}{\partial t} = F(\mathbf{x}(t))$.
Let $\mathbf{x}_0$ denote an equilibrium point i.e. $F( \mathbf{x}_0) = \mathbf{0}$, and let $J(\mathbf{x}_0) = \nabla F(\mathbf{x}_0)$ be the Jacobian matrix of $F$ at $\mathbf{x}_0$. We can then determine the stability of $\mathbf{x}_0$ by checking the sign of the largest real part amongst all eigenvalues of $J(\mathbf{x}_0)$. If the largest real part is negative (positive), then the equilibrium point is stable (unstable).

For stable states of the system \eqref{eq:Q-flow}, we use the semi-implicit Euler method for time discretization and the initial conditions
\begin{gather}
Q_{11} = \cos(2\omega \pi y)/2,\ 
Q_{12} = \sin(2\omega \pi y)/2,\ 
u = -p_x(1-y^2)/2.
\end{gather}
For the unstable OR-type solutions, we assume that the partial derivatives with respect to $t$ are zero, and solve the passive or active flow systems using a Newton solver with a linear LU solver at each iteration. Newton's method strongly depends on the initial condition, so we use the asymptotic expressions \eqref{eq:s-comp} and \eqref{eq:theta-asymptotic} as initial conditions for the passive flow system, and \eqref{eq:s-comp} and \eqref{eq:theta-active-explicit} as initial conditions for the active flow system with small $\Gamma$. In the active case, we perform an increasing $\Gamma$ sweep for the OR branch to obtain OR-type solutions for large $\Gamma$. 

\section{Supplementary material for section \ref{sec:active} - Constant concentration assumption }
In section \ref{sec:active}, we consider the system of active equations \eqref{eq:active_system}, in the case of constant concentration $c$, and therefore omit \eqref{eq:c_dimensional_eqtn}. Here we give justification to this assumption. First, we note that in \cite{Giomo_annihilation}, the authors numerically solve the system \eqref{eq:active_system} (including \eqref{eq:c_dimensional_eqtn}) in two-dimensions, for $\alpha_1= 0.1$ and $\alpha_2=0.2$, and they find the numerical solution $c/c_0$ (where $c_0$ is the average concentration of the system) lies in the interval $[0.96,1]$, therefore it is approximately constant. Similarly, in \cite{excitable}, the authors note the variation in $c$ is of the order of 2\% for their numerically computed profiles when $\alpha_1=0.2$.

Next, we numerically solve the system \eqref{eq:active_system} in the case of non-constant concentration, in our one-dimensional framework. Using the scalings outlined in the main text, the corresponding dimensionless equations are
\begin{subequations}\label{eq:non_constant_c}
\begin{align}
        &\frac{\partial c}{\partial t}= c_{yy}(\Gamma_0-\Gamma_1 Q_{11})-\Gamma_1 c_y Q_{11,y}-\hat{\alpha}_1(2cc_y Q_{11,y}+c^2 Q_{11,yy}),\label{eq:1}\\
        & \frac{\partial Q_{11}}{\partial t}=u_y Q_{12}+ Q_{11,yy}+\left(\frac{c-c^*}{2}\right)Q_{11}-2cQ_{11}(Q_{11}^2+Q_{12}^2), \\
        & \frac{\partial Q_{12}}{\partial t}=-u_y Q_{11}+ Q_{12,yy}+\left(\frac{c-c^*}{2}\right)Q_{11}-2cQ_{11}(Q_{11}^2+Q_{12}^2), \\
        & L_1\frac{\partial u}{\partial t}=-p_x+u_{yy}+\frac{2\gamma}{\mu}(Q_{11}Q_{12,yy}-Q_{12}Q_{11,yy})_y + \hat{\alpha}_2(c^2 Q_{12,y}+2cc_y Q_{12})\label{eq:4},    
\end{align}
\end{subequations}
where $\Gamma_0=\frac{D_0 \gamma}{\kappa}$, $\Gamma_1=\frac{D_1 \gamma}{\kappa}$, $\hat{\alpha}_1=\frac{\alpha_1\gamma}{L^2 \kappa}$, $L_1=\frac{\rho \kappa}{\gamma\mu}$ and $\hat{\alpha}_2=\frac{\alpha_2 \gamma }{\kappa\mu L^2}$ are dimensionless parameters. Here, $\hat{\alpha}_1$ and $\hat{\alpha}_2$ are our measures of activity so that small/large values correspond to small/large activity.  In \Cref{fig:non_constant_c}, we present some numerical solutions of \eqref{eq:1}-\eqref{eq:4}, subject to the following boundary conditions
\begin{equation}
    s(\pm 1)=0.5,\; \theta(\pm 1)=\pm\frac{\pi}{4},\; c_y(\pm 1)=0,
\end{equation}
which can be translated into conditions for $\Qvec$ as in \eqref{eq:Q-bcs}. With $\hat{\alpha}_1=\hat{\alpha}_2=$1, 10, 50, 500, 1000 we clearly recover solutions with approximately constant concentration that are still OR-type profiles, supporting our claim that constant $c$ is a reasonable assumption in certain parameter regimes and/or for OR-type solutions.
\begin{figure}[ht]
    \centering
    \begin{minipage}{1.0\textwidth}
        \centering
        \includegraphics[width=1.0\textwidth]{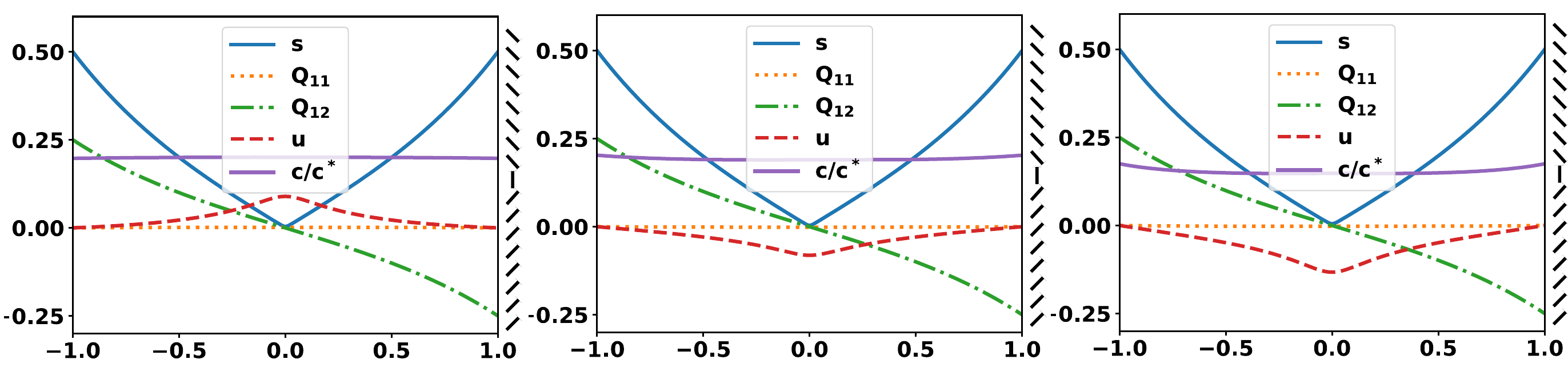}\\
    \end{minipage}
    \begin{minipage}{0.7\textwidth}
        \centering
        \includegraphics[width=0.9\textwidth]{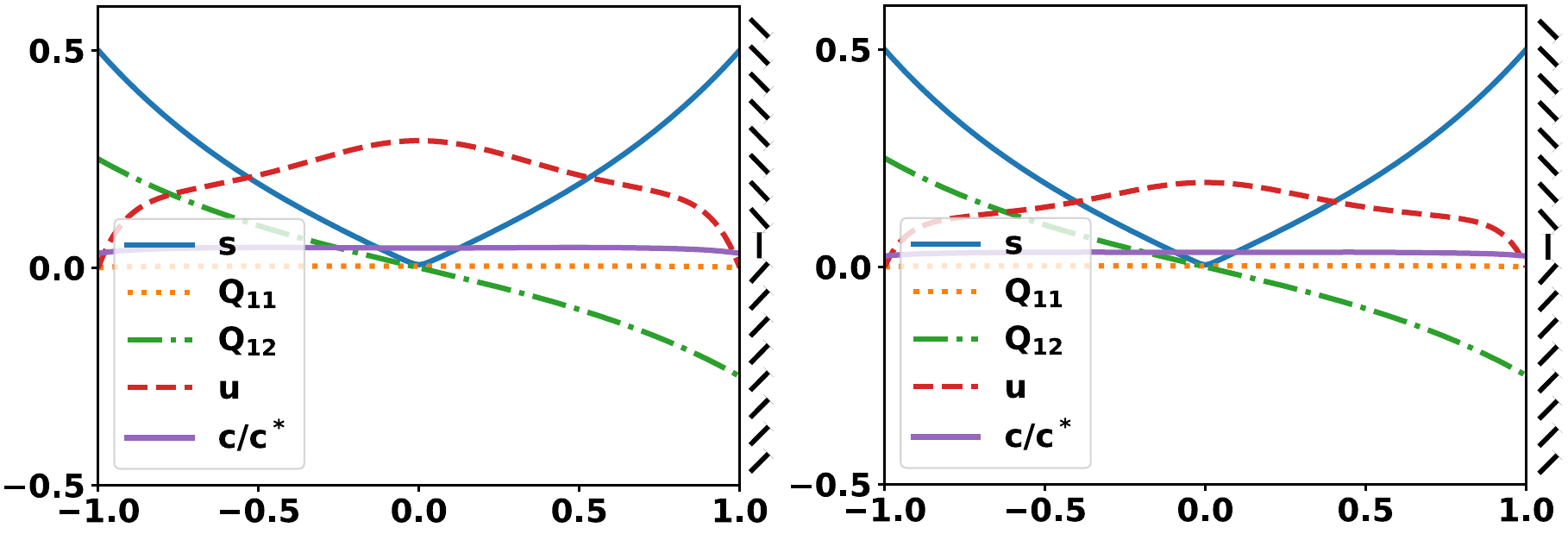}\\
    \end{minipage}
    
    \caption{Solutions of the system \eqref{eq:non_constant_c}, for $\Gamma_0=\Gamma_1=L_1=1$, $\gamma/\mu=0.1$, $p_x=-5$, $c^*=\sqrt{3\pi/2}$, $\hat{\alpha_1}=\hat{\alpha}_2=1,10,50,500,1000$ (from left to right) and $\omega=-1/4$.}
    \label{fig:non_constant_c}
    \end{figure}

\end{document}